\title{Markovian linearization of random walks on groups}
\author{Charles Bordenave\thanks{Institut de Math\'ematiques de Marseille, CNRS \& Aix-Marseille University, Marseille, France.}  \and Bastien Dubail\thanks{Département d'Informatique de l'ENS, ENS, CNRS, PSL University, France and  INRIA, Paris, France. }}
\begin{document}
	
\maketitle

\begin{abstract}
	In operator algebra, the linearization trick is a technique that reduces the study of a non-commutative polynomial evaluated at elements of an algebra $\cA$ to the study of a polynomial of degree one, evaluated  on the enlarged algebra $\cA \otimes M_r (\bC)$, for some integer $r$. 
	We introduce a new instance of the linearization trick which is tailored to study a finitely supported random walk $G$ by studying instead a nearest-neighbour coloured random walk on $G \times \{1, \ldots, r \}$, which is much simpler to analyze. As an application we extend well-known results for nearest-neighbour walks on free groups and free products of finite groups to coloured random walks, thus showing how one can obtain new results for finitely supported random walks, namely an explicit description of the harmonic measure and formulas for the entropy and drift. 
\end{abstract}

\maketitle

\section{Introduction}

Let $\cA$ be a complex unital algebra and consider a non-commutative polynomial $\cP=\cP(x_{1}, \ldots, x_{n})$ in the variables $x_{1}, \ldots, x_{n} \in \cA$. In many cases, a detailed study of the relevant properties of $\cP$ is possible only when the degree of $\cP$ is small, typically when $\cP$ is of degree one, in which case $\cP$ is just a linear combination of the $x_{i}$'s. The linearization trick precisely consists in constructing another polynomial $\tilde{\cP}$ of degree one, which can be related to the relevant properties of $\cP$. It thus can be used to make computations that were not a priori possible for $\cP$. The price to pay is an  enlargement of the algebra: writing $\cP$ as a finite sum of monomials
\[
 \cP = \sum_{i_{1}, \ldots , i_{k}} \alpha_{i_{1}\ldots i_{k}} \: x_{i_{1}} \cdots \ x_{i_{k}},
\]
with $\alpha_{i_{1} \ldots i_{k}} \in \bC$, $\tcP$ is generally constructed as 
\[
 \tilde{\cP} = \sum_{i} \tilde{\alpha}_{i} \otimes x_{i}
\]
where $\tilde{\alpha}_{i} \in M_{r}(\bC)$ are complex matrices. Therefore $\tilde{\cP}$ is no longer in the algebra $\cA$ but in the algebra $M_{r}(\bC) \otimes \cA$ for some integer $r$ that depends on $\cP$ (usually through its degree).

Under various names, the linearization trick has been used in several domains such as electrical engineering, random matrices, operators algebras, automata theory, etc, we refer to the survey \cite{MR3718048}. A famous application of the linearization trick in operator algebra is the computation of the spectrum of a non-commutative polynomial. If $a \in \cA$, the spectrum of $a$ is defined as $\sigma(a) := \{ z \in \bC, z I - a \text{ is not invertible} \}$. Given a non-commutative polynomial $\cP(x_{1}, \ldots, x_{n}) \in \cA$, it is possible to construct a linear $\tilde{\cP} \in M_{r}(\bC) \otimes \cA$ for some integer $r$, such that $(z I -\cP)$ is invertible in $\cA$ if and only if $(\Lambda(z)\otimes I_{\cA} - \tilde{\cP})$ is invertible in $M_{r}(\bC) \otimes \cA$, where $\Lambda(z)$ is the $r \times r$ matrix with only zero elements except the $(1,1)$ entry which is $z$. Moreover, $(z I -\cP)^{-1}$ is then precisely the $(1,1)$ entry of $(\Lambda(z) \otimes I_{\cA} -\tilde{\cP})^{-1}$ (seen as a $r\times r$ matrix of elements in $\cA$). Such a construction can be found for instance in the monograph \cite[Chapter 10]{mingo2017free}. 

As illustrated in this last example, we note that the relevant properties of $\cP$ (here its spectrum and its resolvent) dictates the linearization procedure. In this paper, we introduce a new linearization of the Markov kernel of a random walk with finite range on a group. This linearization is a new Markov kernel on an enlarged state space and it corresponds to a nearest-neighbour random walk on the group. Many properties of the original random walk can be read on the new random walk. We illustrate our method on free groups and free product of finite groups. Notably, we give a new description of the harmonic measure for finite range random walks on these groups and obtain formulas for the drift and entropy as a byproduct.

\subsection{The linearization trick and coloured random walks on groups}

We let $G$ be a finitely generated group with identity element $e$ and we fix a finite set of generators $S$. We assume that $S$ is symmetric, meaning that $g \in S$ implies $g^{-1} \in S$. Let $p=(p_{g})_{g \in G}$ be a probability measure on $G$. Consider the Markov chain $(X_{n})_{n \geq 0}$ on $G$ with transition probabilities
\[
 \bP[X_{n}=h \: | \: X_{n-1} = g] = p_{g^{-1}h},
\]
for all $g,h \in G$. Such a Markov chain is called a convolution random walk.

The random walk is said to be finitely supported, or to have finite range, if the measure $p$ is finitely supported. It is a nearest-neighbour random walk if $p$ is supported on the set of generators $S$. 

As for any Markov chain, the transition kernel of a right convolutional random walk can be seen as an operator acting on square integrable functions: $f=f(x) \mapsto (\cP f)(x)=\sum_{y} P(x,y) f(y)$. In the present case, it can be written in terms of the right multiplication operators $\rho(g)$, $g \in  G$, defined as: for all $f \in \ell^{2}(G)$, 
\[
 \rho(g) \cdot f : x \mapsto f(xg).
\]
Letting $\cP(g,h)=p_{g^{-1}h}$ it is then possible to write 
\begin{equation}\label{eq:defcP}
 \cP = \sum_{g \in G} p_{g} \rho(g).
\end{equation}
This sum is finite if and only if the random walk is finitely supported. Furthermore, $\rho$ is a group morphism so $\rho(gh) = \rho(g) \rho(h)$ for all $g,h \in G$ ($\rho$ is the right regular representation). Writing each element $g$ in the support of the walk as a product of elements of $S$, it is thus possible to write $\cP$ as a non-commutative polynomial in the operators $\rho(g), g \in S$. In other words, $\cP$ is an element of the left group algebra which is generated by the $\rho(g)$'s, $g \in G$. The polynomial $\cP$  is of degree $1$ if and only if the random walk is nearest neighbour. With this point of view, the linearization trick potentially allows to study any finitely supported random walk by considering instead a ``nearest-neighbour operator''
\begin{equation}\label{eq:deftcP}
 \tilde{\cP} = \sum_{g \in G} \tp_{g} \otimes \rho(g)
\end{equation}
for some well-chosen matrices $\tp_{g}\in M_r(\bC)$ such that 
$\sum_{g} \tp_{g}$ is a stochastic matrix, that is, the transition kernel of a Markov chain on $[r]$ where for $n \geq 1$ integer, we set $[n] = \{1, \ldots, n\}$. 

Then $\tcP$  is the transition kernel of a Markov chain $(\tX_{n})_{n}$ on  the state space $G \times [r]$, which at each step moves from a pair $(g,u)$ to a pair $(h,v)$ with probability $\tp_{g^{-1}h}(u,v)$. Equivalently, if one interprets $[r]$ as a set of colours, $\tX_{n}$ can at each step be multiplied by $g \in G$ while the colour is updated from $u$ to $v$ with probability $p_{g}(u,v)$. Such a Markov chain will be called a coloured random walk and can thus be defined similarly to a classical convolution random walk. We note that such random walks already appeared under different names: random walks with internal degrees of freedom \cite{kramli1983random,kaimanovich2005munchhausen,MR2762673}, covering Markov operators \cite{kaimanovich1995poisson}, matrix-valued random walks \cite{connes1989hyperfinite}. We note also that a similar idea of using coloured random walks to study finite range random walks on groups has already been considered by Jean Mairesse (personal communication).

\begin{Def}\label{def:coloured}
  Let $p=(p_{g})_{g \in G}$ be a family of matrices with non-negative coefficients of $M_{r}(\bR)$ such that 
\begin{equation}\label{eq:defP}
P = \sum_{g \in G} p_{g}
\end{equation}  is a stochastic matrix. A coloured random walk is a Markov chain $(Y_{n})_{n}$ on the state space $G \times [r]$, with transition probabilities 
 \[
 \bP[Y_{n} = (h,v) \: | \: Y_{n-1} = (g,u)] = p_{g^{-1}h}(u,v).
 \]
 The support of the random walk is the set of elements $g \in G$ such that $p_{g} \neq 0$. The coloured walk is finitely supported if the support is finite, nearest-neighbour if the support is included in $S$.
\end{Def}

By looking only at the colour coordinate, $(Y_{n})$ induces a Markov chain on the set of colours $[r]$ whose transition matrix is exactly the matrix $P$ in \eqref{eq:defP}.

\paragraph{Irreducibility assumptions.}
For standard convolution random walks, it is generally assumed that the support generates the whole group, which then makes the random walk irreducible. For coloured random walks, we will make the same assumption but suppose as well that the matrix $P$ defines an irreducible Markov chain on the colours. Note this assumption does not necessarily make the coloured walk irreducible, for there may be  unreachable pairs $(g,u)$. 

\begin{Def}
 A coloured walk is quasi-irreducible if the marginal of its support on $G$ generates the whole group and the matrix $P$ is irreducible.
\end{Def}

\paragraph{Reversibility.} An important property satisfied by some Markov chains is reversibility. Recall that, if $X$ is a countable set, $\nu$ a measure on $X$ and $Q$ is a Markov chain, then $Q$ is said to be reversible with respect to $\nu$ if for all $x,y \in X$, $\nu(x) Q(x,y) = \nu(y) Q(y,x)$. In other words, $Q$ seen as an operator on $\ell^2(X,\nu)$ is self-adjoint. In our setting, $\cP$ defined in \eqref{eq:defcP} is said to be reversible if it is reversible for the counting measure on $G$. This is equivalent to the condition: for all $g \in G$, $p_g = p_{g^{-1}}$. Similarly, consider a coloured random walk  $\tcP$ of the form \eqref{eq:deftcP}. Assume that $P$ defined by \eqref{eq:defP} has invariant measure $\pi$ on $[r]$. Then $\tcP$ is reversible if it is reversible for the product of $\pi$ and the counting measure on $G$. This is equivalent to the condition, for all $g \in G$, $u,v \in [r]$, 
$$
\pi(u) p_{g} (u,v) = \pi(v) p_{g^{-1}} (v,u).
$$

\subsection{Main results}

\paragraph{Linearizing random walks.}

The first main contribution of this paper is to formalize a way to apply the linearization trick to random walks on groups.

\begin{Def}\label{def:linear}
 Let $G$ be a a group generated by a finite set $S$, with identity element $e$. Let $(X_{n})_{ n \geq 0}$  be a random walk with kernel $\cP$ as in \eqref{eq:defcP} and with finite support generating $G$. Let $(Y_{n})_{n \geq 0}$ on $G \times [r]$ be a quasi-irreducible, nearest-neighbour coloured random walk with kernel $\tcP$ as in \eqref{eq:deftcP}. We say that  $(Y_{n})_{n \geq 0}$  linearizes $(X_{n})_{n \geq 0}$ (or $\tcP$ linearizes $\cP$) if the following two property holds: $(i)$ if $Y_{0}=(e,1)$ there exists a sequence of stopping times $(\tau_{n})_{n \geq 0}$ with $\tau_{0} = 0$ such that $(Y_{\tau_{n}})_{n \geq 0}$ is a realization of the random walk $(X_n)_{n\geq 0}$ with initial condition $e$, and $(ii)$  these stopping times are a renewal process, that is the variables $\tau_{n+1} - \tau_{n}$ are iid with finite mean. 
 \end{Def}

To be precise, in the above definition, when saying that $Y_{\tau_{n}}$ is a convolution random walk, we of course identify $Y_{\tau_{n}}$ with its $G$ coordinate and forget about the colour, which in this case is constant equal to $1$. 

We remark also that due to the invariance of transition probabilities by translation by elements of $G$ one can always translate the starting point to $e \in G$, so there is no loss of generality in supposing that the walks are started at $e \in G$.

\begin{thm}\label{thm:linearization_nonrev}
 Let $G$ be a a group generated by a finite set $S$, with identity element $e$. Consider a random walk  with kernel $\cP$ as in \eqref{eq:defcP}  with finite support generating $G$. Then there exists $r \geq 1$ and a coloured random walk on $G \times [r]$ with kernel $\tcP$ as in \eqref{eq:deftcP} which linearizes $\cP$. Moreover, if $\cP$ is reversible, then $\tcP$ can be chosen to be also reversible. 
\end{thm}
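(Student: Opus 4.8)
The plan is to build $\tcP$ by \emph{decomposing} each step of $\cP$ into a sequence of nearest-neighbor steps and using the color coordinate as a bookkeeping device that records how far we have progressed in such a decomposition. Concretely, I would enumerate the finite support of $p$ as $h_{1}, \dots, h_{m}$ and fix for each $h_{i}$ a word $h_{i} = s_{i,1} s_{i,2} \cdots s_{i,\ell_{i}}$ with letters $s_{i,j} \in S$, which is possible since $S$ generates $G$. The colors are a distinguished \emph{home} color $1$ together with one color $(i,j)$ for each proper prefix, $1 \le j \le \ell_{i}-1$. From home the walk picks $i$ with probability $p_{h_{i}}$, multiplies the $G$-coordinate by $s_{i,1}$ and moves to color $(i,1)$ (returning directly to $1$ when $\ell_{i}=1$); from a color $(i,j)$ it deterministically multiplies by $s_{i,j+1}$ and moves to $(i,j+1)$, returning to $1$ once the word is finished (the possible mass $p_{e}$ at the identity is carried by a self-loop at home). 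Each elementary move multiplies the $G$-coordinate by a single generator, so $\tcP$ is nearest-neighbor, and $\sum_{g} \tp_{g}$ is stochastic because the home row sums to $\sum_{i} p_{h_{i}} = 1$ while every other row is deterministic.

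Let $\tau_{n}$ be the successive return times of the color coordinate to the home color $1$. Between two such times the walk writes out exactly one word $h_{i}$, chosen afresh with probability $p_{h_{i}}$ and independently of the past, so the $G$-increments $(Y_{\tau_{n}})^{-1} Y_{\tau_{n+1}}$ are iid with law $p$; this gives property $(i)$, that $(Y_{\tau_{n}})$ is a copy of $(X_{n})$ started at $e$. The gaps $\tau_{n+1}-\tau_{n}$ equal the length $\ell_{i}$ of the chosen word, hence are iid and bounded by $\max_{i} \ell_{i} < \infty$, giving property $(ii)$. Finally the marginal support on $G$ is the letter set $\{s_{i,j}\}$, which generates $G$ because the $h_{i}$ do, and the color chain $P$ is irreducible after discarding colors that are never visited; thus the walk is quasi-irreducible and linearizes $\cP$.

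The reversible case is where the real work lies, since the left-to-right writing above is intrinsically directed. Using $p_{h}=p_{h^{-1}}$ I would symmetrize the construction by folding the path of $h$ and the path of $h^{-1} = s_{i,\ell_{i}}^{-1}\cdots s_{i,1}^{-1}$ onto one another: identify the color at position $j$ of the $h^{-1}$-word with the color at position $\ell_{i}-j$ of the $h$-word, so that traversing the $h$-path backwards (with inverse letters, which lie in $S$ since $S$ is symmetric) is exactly the forward $h^{-1}$-path. After this identification the color graph is a bouquet of cycles attached at the home vertex, every edge carrying a generator in one direction and its inverse in the other, on which I would run a reversible nearest-neighbor chain (a conductance, or birth–death, walk on each cycle). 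Choosing the invariant measure $\pi$ on the colors and the edge weights so that the base-point entry probabilities reproduce $p$, the detailed-balance identity $\pi(u)\,p_{g}(u,v) = \pi(v)\,p_{g^{-1}}(v,u)$ then holds edge by edge, which is precisely the reversibility condition for $\tcP$.

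The delicate point, and the step on which I expect to spend the most care, is that a reversible excursion from home need not close a full cycle: it may enter a cycle, wander, and leave on the same side, in which case the net $G$-increment telescopes to $e$. I would therefore redefine $\tau_{n}$ as the successive returns to home that \emph{close a full cycle}, at which the increment is some $h_{i}$ or $h_{i}^{-1}$; by $p_{h}=p_{h^{-1}}$ and the symmetry of the two entry directions these increments have law $p$, recovering property $(i)$. For property $(ii)$, each visit to home completes a full loop with a fixed positive probability, so the number of excursions between consecutive $\tau_{n}$ is geometric, and each excursion, being a return time of a finite irreducible reversible chain, has finite mean; hence $\tau_{n+1}-\tau_{n}$ is iid with finite mean. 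The main obstacle is calibrating the entry weights at the home vertex so that the completed-loop increments match $p$ \emph{exactly}, balancing the ``wasted'' identity excursions against the successful traversals while preserving both stochasticity and detailed balance.
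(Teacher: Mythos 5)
Your construction is essentially the paper's, in both halves. The non-reversible part --- one home color, one color for each proper prefix of a chosen word for each element of the support, deterministic completion of the word, and the $\tau_n$ taken as successive returns to the home color --- is exactly the paper's ``initial construction'' and is complete as you state it (the paper additionally merges the colors of common prefixes to reduce $r$, but that is only an optimization and not needed for the theorem).

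For the reversible part your architecture again coincides with the paper's: fold the representative of $g^{-1}$ onto that of $g$ by identifying $u_k(g^{-1})$ with $u_{|g|-k}(g)$, run a symmetric nearest-neighbor walk on each resulting segment, and redefine $\tau_n$ to count only returns to the home color at a new group element. The one step you explicitly leave open --- calibrating the entry probabilities so that the completed-traversal increments have law exactly $p$ --- is the crux, and the paper resolves it by the gambler's ruin computation: with every interior transition probability equal to $1/2$, an excursion that enters the segment of $g$ traverses it fully (rather than falling back to the home vertex) with probability exactly $1/|g|$, so conditioning on the first completed traversal gives $\bP[Y_{\tau_1}=g] = (\alpha_g/|g|)\big/\sum_{h}(\alpha_h/|h|)$, and the choice $\alpha_g = (1-p_e)\,|g|\,p_g \big/ \sum_{h}|h|\,p_h$ for $g\neq e$ (with $\alpha_e=p_e$) makes this equal to $p_g$ while keeping $\sum_g \alpha_g=1$; the symmetry $\alpha_g=\alpha_{g^{-1}}$ needed for the folding uses $p_g=p_{g^{-1}}$ and $|g|=|g^{-1}|$, and detailed balance holds for the measure giving mass $1$ to the home color and $2\alpha_g$ to each interior color of the segment of $g$. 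With that formula inserted your argument closes; the rest of your reversible analysis (geometric number of failed excursions, finite mean of $\tau_1$, increments of law $p$) is correct as written.
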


Theorem \ref{thm:linearization_nonrev} has been stated in a non-constructive manner for ease of notation. The proof of Theorem \ref{thm:linearization_nonrev} in Section \ref{section:linearization} will exhibit two simple linearization constructions which have a vector $p = (p_g)_{g \in G}$ as input and gives as output the integer $r$ and the family of matrices $(\tilde p_g)_{g \in S}$. There is one construction in the general case and one construction which preserves reversibility. We refer to Remark \ref{rk:bdr} for the number of colours $r$ needed in both constructions.
We note also that the previous spectral linearization tricks reviewed in  \cite{MR3718048} did not preserve the Markov property and could not be used to prove Theorem \ref{thm:linearization_nonrev}.

There are possible extensions of the Markovian linearization trick. It is  possible with the same proof techniques to linearize a coloured random walk on $G \times [r]$ with finite range and obtain a coloured nearest-neighbour coloured random walk on $G \times [r']$ with $r' \geq r$.  
In another direction, it is possible to linearize random walks on $G$ which do not have a finite range provided that we allow a countable set of colours, see Remark \ref{rk:infiniteR} below.  Finally, in this paper we focus on groups only but our first linearization construction applies to random walks on monoids as well. 

%
%
%
%
%

\paragraph{Explicit description of the harmonic measure}

The whole point of the linearization trick is that the complexity added by passing to matrices is expected to be much smaller than the benefit of having nearest-neighbour random walks. The second part of this paper provides such an example where results extend naturally to the matrix setting, thus illustrating the use of the linearization trick. Namely, we extend the explicit description of the harmonic measure for nearest-neighbour convolution walks on groups with an underlying tree structure, such as free groups \cite{kaimanovich1983random, ledrappier2001some} and free products of finite groups \cite{jean2005random,mairesse2007random, MR2378433, gilch2011asymptotic}. Following \cite{jean2005random}, these two cases can be considered at once. 

Write $\Fd$ for the free group on $d$ generators $a_{1}, \ldots a_{d}$ and let $G_{1}, \ldots, G_{m}$ be finite groups. Consider the group $G = \Fd \ast G_{1} \ast \cdots \ast G_{m}$, with the set of generators
\begin{equation}\label{eq:generators}
S := \bigcup_{i=1}^{d} \{a_{i}, a_{i}^{-1}\} \bigcup \left( \bigsqcup_{j=1}^{m} S_{j} \right).
\end{equation}
In the above expression, for all $j = 1, \ldots, m$, $S_{j} := G_{j} \setminus \{e\}$ is the set of elements of $G_{j}$ distinct from the identity element, and $\bigsqcup$ denotes a disjoint union. Such groups were considered in \cite{jean2005random,haring1983groups} under the name \emph{plain groups}.

Introducing the notation from \cite{jean2005random},
\begin{equation}\label{eq:next}
	\forall g \in S, \quad \Next(g) := \left\{ \begin{array}{l l}
		S \setminus \{g^{-1}\} & \text{if $g \in \Fd$} \\
		S \setminus S_{i} & \text{if $g \in S_{i}$}
	\end{array}\right.,
\end{equation}
we see that every element $g \in G$ writes uniquely as a word $g=g_{1} \cdots g_{n}$ with $n = \abs{g}$ and $g_{i+1} \in \Next(g_{i})$ for all $i=1, \ldots, n-1$.

\smallskip

Consider an irreducible convolution random walk $(X_{n})_{\geq 0}$ on $G$. If we set aside the trivial cases where $G$ is isomorphic to $\bZ$ or $\bZ / 2 \bZ \ast \bZ / 2 \bZ$, $G$ is a non-amenable group which implies that $(X_{n})_{n \geq 0}$ is transient (see for instance \cite[Chapter 1]{woess2000random}). More precisely, arguments going back to Furstenberg \cite{MR163345} show that $X_{n}$ converges a.s. to an infinite word $X_{\infty} = (\xi_{i})_{i \geq 1}$ with the property that for all $i \geq 1, \xi_{i} \in S$ and $\xi_{i+1} \in \Next(\xi_{i})$. The law $p^{\infty}$ of $X_{\infty}$ is called the harmonic measure and it provides much information on the asymptotic properties of $X_{n}$. 

In the case $(X_{n})_{n \geq 0}$ is nearest-neighbour, the harmonic measure can be described very explicitely. In particular it is Markovian: writing $X_{\infty}^{(k)}$ for the truncature of $X_{\infty}$ to its $k$ first letters, the sequence $(X_{\infty}^{(k)})_{k \geq 1}$ is a Markov chain. The transition probabilities are as follows: given $g \in S$, let $\mu_{g} := p^{\infty}\sbra{\xi_{1} = g} = \bP \sbra{X_{\infty}^{(1)} = g}$. Then for all $g_{1}, \ldots, g_{k} \in S$, satisfying $g_{i+1} \in \Next(g_i)$ for all $i$, 
\begin{equation}\label{eq:harmonic_nocolour}
	p^{\infty} \sbra{ (\xi_i)_{i=1}^{k} = (g_{i})_{i=1}^{k} } = \bP \sbra{X_{\infty}^{(k)} = g_{1} \cdots g_{k}} = \mu_{g_1} \frac{\mu_{g_2}}{\sum_{h_2 \in \Next(g_1)} \mu_{h_2}} \cdots \frac{\mu_{g_k}}{\sum_{h_k \in \Next(g_{k-1})} \mu_{h_{k}}}.
\end{equation}
This equation determines the harmonic measure on cylinders and thus determines it completely.

On the other hand as soon as one supposes the random walk is only finitely supported, then this description completely falls apart and much less is known about the harmonic measure. A second contribution of this paper is to show how these results can actually be extended even for general finitely supported random walks. Extending well-known techniques for nearest-neighbour random walks to the coloured setting, the linearization trick then allows to transfer results back to the case of finitely supported convolution walks .

Extensions of harmonic measures and of more general concepts (namely, the Poisson boudary) to the case of coloured random walks have already been considered in \cite{kaimanovich1995poisson}. Here it takes the form of a measure $p^{\infty}_{u}$ on infinite words in the group, $u \in [r]$ being the starting colour. Notice that it does not take colours other than through the starting colour. Yet colours need to be incorporated to recover the Markovian decomposition, which is achieved by considering a matrix version of the parameters $\mu_{g}$, satisfying
\[
p^{\infty}_{u}(\xi_{1} = g) = \sum_{v \in [r]} \mu_{g}(u,v), \: \quad \forall u \in [r].
\]


Supposing now that $X_{n}$ is nearest-neighbour, the family of parameters $(\mu_g)_{g \in S}$ is uniquely characterized by a family of matrix relations \eqref{eq:traffic}, generalizing the so-called traffic equations of Mairesse \cite{jean2005random}, which arrive as a consequence of stationarity:
\begin{equation}\label{eq:traffic}
	x_{g} = p_{g}  \Delta(x)_{g}  + \sum_{\substack{h,h' \in S \\ hh' = g}} p_{h} x_{h'} + \sum_{h \in \Next(g)} p_{h^{-1}} x_{h}   \Delta(x)_{h} ^{-1} x_{g}, \quad \forall g \in S,
\end{equation}
where $\Delta(x)_{g}$ is the diagonal matrix with entries, for $u \in [r]$,
\begin{equation}\label{eq:defDg}
	\Delta(x)_{g}(u,u) := \sum_{h \in \Next(g)} \sum_{v \in [r]} x_{h}(u,v).
\end{equation} 
In the sequel, $\Delta(\mu)_{g}$ will be simply written $\Delta_{g}$. 

These matrices are also used to extend Equation \eqref{eq:harmonic_nocolour} to the coloured setting. The global description of the harmonic measure can be summed up in the following theorem. We write $\II$ for the vector of $\bR^{r}$ with all coordinates equal to $1$ and $\II_{u}$ for the indicator at $u \in [r]$, that is the vector with zero coordinates except the $u$ entry equal to $1$.

\begin{thm}\label{thm:harmonic}
	For all starting colour $u \in [r]$ and all cylinder $\xi_{1} \cdots \ \xi_{n}$,
	\begin{equation}\label{eq:law_lerw}
		\bP_{u} \sbra{X_{\infty}^{(n)} = \xi_{1} \cdots \xi_{n}, u_{n} = v} =  \II_{u}^{\top} \mu_{\xi_{1}} \Delta_{\xi_1} ^{-1} \mu_{\xi_{2}} \cdots  \Delta_{\xi_{n-1}} ^{-1} \mu_{\xi_{n}} \II_{v},
	\end{equation}
	and
	\begin{equation}\label{eq:harmonic}
		p^{\infty}_{u}(\xi_{1} \cdots \ \xi_{n}) = \II_{u}^{\top} \mu_{\xi_{1}}  \Delta_{\xi_1}^{-1} \mu_{\xi_{2}} \cdots \Delta_{\xi_{n-1}} ^{-1} \mu_{\xi_{n}} \II.
	\end{equation}
	Moreover, the family $\mu = (\mu_{g})_{g \in S}$ is the unique family of matrices with non-negative entries which sum to a stochastic matrix and which are solutions of Equations \eqref{eq:traffic}.
\end{thm}

One motivation for this work was the derivation for finitely supported walks of explicit formulas for various invariant quantities of interest, mainly entropy and drift. By ’explicit’, we mean expressing drift and entropy in terms of the unique solutions of some finite dimensional fixed
point equations. These can be obtained easily from the harmonic measure.

Suppose $\bE \sbra{\abs{X_{1}}} < \infty$, which is in particular true if the walk is finitely supported. The length with respect to the generating set $S$ of $x=(g,u) \in G \times [r]$ is defined as
\begin{equation}\label{eq:|g|}
	|x| := |g| = \min \{k, g = a_{1} \cdots a_{k}, a_{i} \in S \}.
\end{equation}
On the other hand, recall the definition of the entropy:
\[
H(X_{n}) := - \sum_{g} \bP \sbra{X_{n}=g} \log \bP \sbra{X_{n} =g}.
\]
By sub-additivity arguments, eg. Kingman's theorem, one can prove the existence of the following limits:
 \begin{align}
	\gamma &= \lim_{n \rightarrow \infty} \frac{\abs{X_{n}}}{n} \quad \text{a.s. and in $L^{1}$,} \\
	h  &= \lim_{n \rightarrow \infty} - \frac{ \log \cP^{n}((e,u), X_{n})}{n} \quad \text{a.s. and in $L^{1}$}.
\end{align}
$h$, often called the Avez entropy, or entropy rate, or asymptotic entropy, of the random walk, can in fact be interpreted as the drift for the Green pseudo-metric, see \cite{MR2408585}. In this paper it will often simply be referred to as the entropy of the random walk. We refer the reader to Kaimanovich \cite{kaimanovich1983random} for a general reference on the topic in the case of convolution random walks, whereas extended notions of entropy and drift to the setting $G \times [r]$ were considered in \cite{kaimanovich2005munchhausen,MR2762673}.

In the specific case of a linearizing random walk, the initial random walk can be recovered along a sequence of stopping times. Hence the law of large numbers immediately yields the following Abramov formulas. Such formulas are well known, see \cite{MR3199796,MR3667996}.
\begin{coroll}
Let $(X_{n})_{n \geq 0}$ be a finitely supported random walk on $G$ and $(Y_{n})_{n \geq 0}$ a coloured random walk that linearizes $(X_{n})_{n \geq 0}$ in the sense of Definition  \ref{def:linear}. The drift $\tilde \gamma$ and entropy $\tilde h$ of $(Y_{n})_{n \geq 0}$ can be related to the drift $\gamma$ and entropy $h$ of $(X_{n})_{n \geq 0}$ by:
	$\gamma = \bE[\tau_{1}]  \tilde \gamma$ and  $h = \bE[\tau_{1}]  \tilde  h$.
\end{coroll}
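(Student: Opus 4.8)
The plan is to exploit the renewal identity furnished by Definition \ref{def:linear}: starting from $Y_0=(e,1)$, the process $(Y_{\tau_n})_{n\ge 0}$ is a copy of $(X_n)_{n\ge 0}$, with the color constantly equal to $1$ at the times $\tau_n$, so that (identifying $Y_{\tau_n}$ with its $G$-coordinate) $\abs{X_n}=\abs{Y_{\tau_n}}$. The whole corollary then reduces to combining the a.s. limits of Proposition \ref{prop:existence_entropy} with the strong law of large numbers for the renewal sequence $(\tau_n)$, which has iid increments of finite (positive) mean, whence $\tau_n\to\infty$ and $\tau_n/n\to\bE[\tau_1]$ a.s.

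First I would treat the drift. Writing
\[
\frac{\abs{X_n}}{n} = \frac{\abs{Y_{\tau_n}}}{\tau_n}\cdot\frac{\tau_n}{n},
\]
I note that $\abs{Y_m}/m\to\tilde\gamma$ a.s. by Proposition \ref{prop:existence_entropy}; evaluating this a.s. convergent sequence along the a.s. divergent (random) subsequence $(\tau_n)$ gives $\abs{Y_{\tau_n}}/\tau_n\to\tilde\gamma$ a.s. Together with $\tau_n/n\to\bE[\tau_1]$, the right-hand side converges a.s. to $\bE[\tau_1]\,\tilde\gamma$, while the left-hand side converges a.s. to $\gamma$; equating the limits yields $\gamma=\bE[\tau_1]\,\tilde\gamma$.

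For the entropy I would run the same scheme after realizing $h$ and $\tilde h$ as drifts for the respective Green pseudo-metrics (cf. \cite{MR2408585}). Let $d(e,g)=-\log F(e,g)$ and $\tilde d((e,1),(g,v))=-\log\tilde F((e,1),(g,v))$, where $F,\tilde F$ are the hitting probabilities of $(X_n)$ and $(Y_m)$; then $h=\lim_n d(e,X_n)/n$ and $\tilde h=\lim_m \tilde d((e,1),Y_m)/m$ a.s. The crucial point is that, by construction, the renewal times $(\tau_n)$ are the successive visits of $(Y_m)$ to color $1$, so $(Y_m)$ reaches $(g,1)$ for some $m$ if and only if $(X_n)$ reaches $g$ for some $n$; this gives $\tilde F((e,1),(g,1))=F(e,g)$, i.e. $\tilde d((e,1),(g,1))=d(e,g)$. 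Then
\[
\frac{d(e,X_n)}{n} = \frac{\tilde d((e,1),Y_{\tau_n})}{\tau_n}\cdot\frac{\tau_n}{n}
\]
and the identical subsequence-plus-SLLN argument shows the right-hand side tends a.s. to $\bE[\tau_1]\,\tilde h$ and the left-hand side to $h$, giving $h=\bE[\tau_1]\,\tilde h$.

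The main obstacle is exactly the entropy identity, and specifically the identification $\tilde F((e,1),(g,1))=F(e,g)$: one must verify that the renewal times produced by the linearization of Theorem \ref{thm:linearization_nonrev} coincide with the returns to color $1$, so that hitting a pair $(g,1)$ by the colored walk is equivalent to the underlying walk hitting $g$. If one prefers to argue directly with the kernels via $\cP^n(e,g)=\bP[Y_{\tau_n}=(g,1)]$ rather than through the Green pseudo-metric, the same difficulty resurfaces as the need to compare the law of $Y$ at the \emph{random} time $\tau_n$ with the deterministic-time kernel $\tcP^m((e,1),\cdot)$ for $m\approx n\,\bE[\tau_1]$ on the exponential scale; this comparison is supplied by the concentration of $\tau_n$ around $n\,\bE[\tau_1]$ coming from the renewal structure, together with finiteness of the entropy (guaranteed here by the finite support). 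The remaining ingredients — $\tau_n\to\infty$, the SLLN for $\tau_n/n$, and the passage to the subsequence $(\tau_n)$ in the a.s. limits of Proposition \ref{prop:existence_entropy} — are routine.
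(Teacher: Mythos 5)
Your drift argument is exactly the paper's: the corollary is stated with only the one\-/line justification ``a.s. convergence plus the law of large numbers,'' and the factorization $\abs{X_n}/n=(\abs{Y_{\tau_n}}/\tau_n)(\tau_n/n)$ together with the subsequence trick along $\tau_n\to\infty$ is the intended and complete proof of $\gamma=\bE[\tau_1]\tilde\gamma$.

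For the entropy you have correctly located the real issue, which the paper glosses over: $\cP^n(e,g)=\bP[Y_{\tau_n}=(g,1)]$ is the law of $Y$ at a \emph{random} time, not $\tcP^{m}((e,1),(g,1))$ at a deterministic time, so the naive subsequence argument applied to $-\log\tcP^{m}((e,1),Y_m)/m$ does not directly transfer. Your Green\-/metric reformulation is the right kind of fix, precisely because it replaces the ensemble quantity $\cP^n(e,X_n)$ by a functional $d(e,X_n)$ of the trajectory, to which the subsequence-plus-LLN argument legitimately applies. However, as written it rests on two ingredients not supplied by the paper. First, the identity $\tilde h=\lim_m \tilde d((e,1),Y_m)/m$ for the \emph{colored} walk is an extension of the cited Green\-/speed result to quasi-transitive chains; the paper only invokes that result for convolution walks on groups, so you would need to prove (or cite) this extension. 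Second, the identification $\tilde F((e,1),(g,1))=F(e,g)$ does not follow from Definition \ref{def:linear} alone: the definition only forces the color to be $1$ at the times $\tau_n$, and a priori the colored walk may visit color $1$ at intermediate times at group elements never visited by $(X_n)$, so in general one only gets $\tilde F((e,1),(g,1))\ge F(e,g)$. Equality must be checked on the explicit constructions of Section \ref{section:linearization} (where it does hold, since between renewals the color\-/$1$ positions are confined to $\{X_{n-1},X_n\}$), which means your proof establishes the corollary for those linearizations rather than for an arbitrary one in the sense of the definition. Your alternative route via concentration of $\tau_n$ around $n\bE[\tau_1]$ is also viable (it is essentially Abramov's formula for induced chains) but likewise requires a quantitative comparison of $\bP[Y_{\tau_n}=\cdot]$ with $\tcP^m((e,1),\cdot)$ that you only sketch. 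In short: drift is done; entropy is a correct plan with two identified but unclosed gaps.
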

We notice also that the expected time $\bE[\tau_1]$ has a simple expression in the two linearization constructions given in Section \ref{section:linearization}, see Remark \ref{rk:Etau} and Remark \ref{rk:Etaurev}.

On the other hand, the following results provide formulas for the drift and entropy coloured random walks. They are direct extensions of known results in the colourless case (corresponding to $r=1$) and the proof will be omitted. The usual scalar product is denoted by $\innerprod{}{}$.

\begin{thm}\label{thm:formula_drift_entropy}
	Let $(Y_{n})_{n \geq 0}$ be a nearest-neighbour quasi-irreducible coloured random walk on $G \times [r]$ defined by a family of matrices $(p_{g})_{g \in S}$. Let $\pi$ be the unique invariant probability measure of the stochastic matrix $P = \sum_{g} p_{g}$. The drift end entropy of $(Y_{n})_{n \geq 0}$ are respectively given by
	\begin{equation*}
		\begin{split}
			\gamma &= \sum_{g \in S} \big\langle {\II} , {\pi p_{g} \big( -\mu_{g^{-1}} + \sum_{h \in \Next(g)} \mu_{h} \big) }\big\rangle.
			\\
			&= \sum_{g \in S}  \sum_{u,v,w \in [r]} \pi(u) p_{g}(u,v) \big( -\mu_{g^{-1}}(v,w) + \sum_{h \in \Next(g)} \mu_{h}(v,w) \big).
		\end{split}
	\end{equation*}
	and
	\begin{equation}\label{eq:entropy_integral}
		h = - \sum_{\substack{g \in G \\ u,v \in [r]}} \pi(u) p_{g}(u,v) \int \log \left(\frac{d g^{-1} p^{\infty}_{u}}{d p^{\infty}_{v}} \right) d p^{\infty}_{v}.
	\end{equation}
\end{thm} 

For nearest-neighbour random walks, the previous integral formula can yield explicit expression for the entropy. However the non-commutativity of $M_r(\bC)$ creates new difficulties. For coloured random walks, adapting the computation naturally leads to the problem of determining the limit of infinite products of random matrices. This in in turn leads to a formula of the entropy in terms of the parameters $(\mu_g)_{g \in S}$ and the law of a family indexed by $[r]$ of random probability measures on $\bR^{r}$ whose law is uniquely characterized by some invariance equation. 

To state our result more precisely, we first introduce the hitting probabilities defined as follows. Given $g \in G, u \in [r]$, let $\tau_{(g,u)} := \inf \{ n : X_{n} = (g,u)\}$ be the hitting time of a pair $(g,u)$, so that $\tau_{g}:= \min_{u} \tau_{(g,u)}$ is the hitting time of $g$. For $g \in S$,  set
\begin{equation}\label{eq:defqg}
	q_{g}(u,v) := \bP[ \tau_{g} < \infty \text{ and } \tau_{g} = \tau_{(g,v)} ].
\end{equation}
As we will check, the matrices $q_g$ and $\mu_g$ satisfy a simple relation \eqref{eq:mu=qD} and the hitting probabilities $(q_g)_{g \in S}$ are characterized by a quadratic equation \eqref{eq:hitting_probabilities} which can be solved quite explicitly for the free group.  In the statement below, $\cP_+$ denotes the open simplex of probability measures $[r]$:
\begin{equation}\label{eq:defcP+}
	\cPr := \left\{ x \in \bR^{r} : \sum_{i \in [r]} x_{i} =1 \text{ and } x_{i} > 0, \: \forall i \in [r]\right\}.
\end{equation}

\begin{thm}\label{thm:formula_entropy_explicit}
	Suppose the matrices $(q_{g})_{g \in S}$ satisfy \eqref{eq:convergence_product} defined in Section \ref{section:computing}. Then the entropy of the coloured random walk is given by
	\begin{equation}
		\begin{split}  h = - \sum_{g  \in S} \sum_{u,v,w\in [r]} \pi(u) p_{g}(u,v) \Bigg[   \mu_{g^{-1}}(v,w) \int \log \left( \frac{\innerprod{\II_{v}}{z}}{\innerprod{q_{g^{-1}}(u,\cdot)}{z}} \right) \: d \nu_{w}(z) \\
			+ \sum_{  h \in \Next(g) } \mu_{h}(v,w) \int \log \left(  \frac{\innerprod{q_{g}(v,\cdot)}{q_{h} z}}{\innerprod{\II_{u}}{q_{h} z}} \right )\: d \nu_{w}(z) \\
			+ \sum_{ h \in S :  gh \in S } \mu_{h}(v,w) \int \log \left( \frac{\innerprod{q_{gh}(v,\cdot)}{z}}{\innerprod{q_{h}(u,\cdot)}{z}} \right) \: d \nu_{w}(z) \Bigg] ,
		\end{split}
	\end{equation}
	where $(\nu_{u})_{u \in [r]}$ is the unique family of probability measures on $\cPr$ satisfying 
	\[
	\int f(z) \: d \nu_{u}(z) = \sum_{\substack{g \in S \\ v \in [r]}}  \int f( q_{g} z) \mu_{g}(u,v) \: d \nu_{v}(z), \quad \forall u \in [r],
	\]
	for all bounded measurable functions $f$ on $\cPr$.
\end{thm}

The technical condition \eqref{eq:convergence_product} is described in Section \ref{section:computing}. Let us simply note that this condition is automatically satisfied for coloured random walks arising as a linearization of a finite range random walk on $G$ (this is the content of Proposition \ref{prop:linlalley} below).
Applying Theorem \ref{thm:formula_entropy_explicit} to a nearest-neighbour walk for which $r= 1, \pi = 1, \nu = \delta_1$, we get back formula (22) in \cite{jean2005random}:
\[
h = - \sum_{g \in S} p_{g} \left( \mu_{g^{-1}} \log \frac{1}{q_{g^{-1}}} + \sum_{h \in \Next(g)} \mu_{h} \log q_{g} + \sum_{h \in S  : \ gh \in S} \mu_{h} \log \frac{q_{gh}}{q_{h}} \right).
\]
As alluded to above, in Theorem \ref{thm:formula_entropy_explicit}, the measure $\nu_g$  on $\cP_+$ will arise as the convergence in direction toward a random rank one projector of the  product of matrices $q_{g_1}q_{g_2}\cdots q_{g_n}$ where $g_1,g_2, \ldots$ in $S$ are the successive letters of $X_\infty$ with law $p^\infty$ conditioned to start with $g_1 = g$. 

For finite range random walks on free products of groups, fine qualitative results on drift and entropy were already available such as  \cite{lalley1993finite,MR2914862} and even on general hyperbolic groups \cite{MR3651925,MR3235351} and references therein. As pointed in \cite{ledrappier2001some,MR3235351}, the computations for drift and entropy for nearest-neighbour random walk on the free group have been known for around 50 years but it was however unexpected that these explicit formulas could be extended to finite range random walks. To our knowledge, Theorem \ref{thm:formula_drift_entropy} and Theorem \ref{thm:formula_entropy_explicit} provide the first formulas for finite range random walks on free products of groups. It would be interesting to 
try to generalize this kind of formulas in other settings such as amalgamated free  products of finite groups or virtually free groups.  

In this paper, we have chosen to focus through the harmonic measure on the drift and entropy but there are other numerical invariants associated to a random walk such as the spectral radius of $\cP$ or the index of exponential decay of the return probability: $\lim_{n \to \infty} \cP^{2n}(e,e)^{1/(2n)}$ (they coincide for reversible walks). Our linearization technique could also be used to study these last two quantities but this is less novel since these quantities can be read on the resolvent operator and previous linearization techniques allow to compute resolvent  of operators of the form \eqref{eq:defcP}, see \cite{MR3718048} for such linearization and \cite{lehner1999computing,MR2216708} for examples of computation of resolvent operators of the form \eqref{eq:deftcP}.

\paragraph{Organization of the paper.}
In Section \ref{section:linearization} we propose a general construction of linearized coloured walks, with some variations, that lead to Theorem \ref{thm:linearization_nonrev}. 
In Section \ref{section:coloured} we define the harmonic measure and prove Theorem \ref{thm:harmonic}. Section \ref{section:computing} contains the arguments leading to Theorem \ref{thm:formula_entropy_explicit}.

\section{Linearization: proof of Theorem \ref{thm:linearization_nonrev}}\label{section:linearization}

Let $(X_{n})_{n \geq 0}$ be a finitely supported random walk on $G$. In this section we construct a nearest-neighbour coloured random walk $(Y_{n})_{n \geq 0}$ from which $(X_{n})_{n \geq 0}$ can be easily recovered. A natural strategy is to try to split transitions into nearest-neighbour steps, so that when projected on $G$, $Y_{n}$ becomes a delayed version of $X_n$. The two constructions that follow are simple refinements of this idea. The first one improves on the naive approach in order to reduce the dimension of the matrices, whereas the second approach adapts the idea to preserve reversibility.

From now on, let  $a_{1}, \ldots, a_{d}$ be the elements of the generating set $S$, and $a_{0} := e$. 
Let $K$ denote the support of the probability measure $p$ and $\ell_K$ be the maximal length of an element of $K$. For each $g \in K$ choose arbitrarily a representative $s(g)$, which can always be assumed to be of length $|g|$. We will generally identify $g$ with its representative and write simply $g=a_{i_{1}} \cdots a_{i_{k}}$ to refer to the fact that $s(g) = a_{i_{1}} \cdots a_{i_{k}}$, for some $i_{1}, \ldots, i_{k} \in [d]$. Similarly, set $p_{i_{1} \cdots i_{k}} := p_{g}$, with in particular $p_{0} := p_{e}$ and $p_{i} := p_{a_{i}}$, whether $a_{i} \in K$ or not. Since we look for a nearest-neighbour coloured random walk, the coloured random walk $(Y_{n})_{n \geq 0}$ is defined by the $r\times r$ matrices $\tp_{i}, i \in [0:d]$.

\paragraph{General Idea}

Suppose $X_{n}$ can jump directly from $e$ to the element $g=a_{i_{1}} \cdots a_{i_{k}} \in K$. The basic idea consists in splitting this one transition into $k$ successive steps: from $e$ to $a_{i_1}$, then from $a_{i_1}$ to $a_{i_{2}}$, etc. The question arising next is the following: once the walk is at $a_{i_1}$ how can it tell whether this step was part of the move towards $g$ or the first step of another transition? This problem is addressed by the introduction of colours. With this extra parameter, it becomes possible to distinguish between transitions that use the same elementary steps. A specific colour, written $1$, will therefore serve as a reference colour: it is the colour telling the walk that it is not in the middle of transition. By opposition, all other colours are only transitional. In particular, the initial random walk is recovered as the trace of the coloured walk along colour $1$. 

The previous ideas are common to all the linearization procedures that we propose. The latter differ in the way colours are introduced and by the transition probability one assigns to elementary steps.

\subsection{Linearization in the non-reversible case}\label{subsec:linsuff}

The naive strategy sketched at the beginning of this section consists in imposing deterministic steps. Taking again the example of a transition $g=a_{i_{1}} \cdots a_{i_{k}} \in K$, we put a probability $p_{g}$ on the transition from $e$ to $a_{i_1}$ and then probability $1$ to all other transitions. This is possible if one allows sufficently many colours, namely $\abs{g} - 1$ colours, exclusive to every generator. Thus the coloured walk $Y_{n}$ goes from $(e,1)$ to $(a_{i_1}, 2)$ with probability $p_{g}$ and then move deterministically from $(a_{i_1}, 2)$ to $(a_{i_2}, 3)$, etc. to eventually arrive at $(g,1)$. The set of colours $\{2, \ldots, \abs{k} \}$ needs to be exclusive to $g$.

It is possible to improve on this idea, noticing that elements of $K$ with common prefixes can share the same colours.

Recall that a representative of minimal length has been fixed for each element of $K$. These representatives are called the representatives of $K$. Given $g,h \in K, g = a_{i_{1}} \cdots a_{i_{k}}, h = a_{j_{1}} \cdots a_{j_{k'}}$, the prefix of $g$ and $h$ is defined as $g \wedge h = a_{i_{1}} \cdots a_{i_{m}}$ where $m := \max \{n \geq 0: i_{1} = j_1, \ldots, i_n  = j_n \}$. $h$ is a strict prefix of $g$ if $g \wedge h = h$ and $\abs{g} > \abs{h}$.
 For $k \in [\ell_K]$ and $i_{1} \ldots, i_{k} \in [d]$, let 
\[
 [i_{1} \cdots i_{k}] := \{ g \in K, g \wedge a_{i_{1}} \cdots a_{i_{k}} = a_{i_{1}} \cdots a_{i_{k}}, \abs{g} \geq k+1 \}
\]
 and
\[
 p_{[i_{1} \cdots i_{k}]} := \sum_{g \in [i_{1} \cdots i_{k}]} p_{g}
\]
be the cumulative mass under $p$ of all words in $K$ for which $a_{i_{1}} \cdots a_{i_{k}}$ is a strict prefix. Then for $k \in [\ell_K]$, we set 
\[
 q_{i_{1} \cdots i_{k}} = \frac{p_{[i_{1} \cdots i_{k}]}}{p_{[i_{1} \cdots i_{k-1}]}}
\]
where the denumerator is to be taken as equal to one in the case $k=1$. 

The essential idea is the following: we associate a colour $u_{i_1 \cdots i_k}$ to each strict prefix $[i_1 \cdots i_k]$. Suppose now the random walk starts at $(e,1)$ and goes to $a_{i_1}$. With probability $p_{i_1}$ it stays at colour $1$ and with probability $q_{i_1}= p_{[i_1]}$ it takes colours $u_{i_1}$. Suppose this case occurs and the walk continues to $a_{i_1} a_{i_2}$. Then it can come back to colour $1$ with probability $p_{i_{1} i_{2}} / p_{[i_1]}$ or continue to colour $u_{i_{1} i_{2}}$ with probability $q_{i_1 i_2}$. With our choice of transition probabilities, the overall probability to have moved from $(e,1)$ to respectively $(a_{i_1} a_{i_2}, 1)$ and $(a_{i_1} a_{i_2}, u_{i_{1} i_{2}})$ is thus respectively $p_{i_{1} i_{2}}$ and $p_{[i_{1} i_{2}]}$. Iterating this procedure yields the whole generalization, which is formalized in what follows.

Here we will make use of the operator point of view in order to write all the matrices defining $(Y_{n})_{n \geq 0}$ at once. As $M_{r}(\bC) \otimes \cA$ is isomorphic to $M_{r}(\cA)$ for any unital algebra $\cA$, the transition kernel $\tP$ can be written as one matrix whose coefficients are operators on $\ell^{2}(G)$. In this case, the matrix has coefficients which are linear combinations of the multiplication operators. The matrices $p_{i}, i \in [0,d]$ can easily be deduced: to obtain $p_{i}$ it suffices to replace $\rho(a_i)$ by $1$ and the $\rho(a_j), j \neq i$ by $0$.

For all $k \in [\ell_K]$, let 
\[
r_{k} := \mathrm{Card} \ \{(i_{1}, \ \ldots \ , i_{k}) \in S^{k}, [i_{1} \cdots i_{k}] \neq \emptyset \} 
\]
be the number of strict prefixes of elements of $K$ that have length $k$. 
Then, we define the following matrices: $C(k)$ is the $r_{k-1} \times 1$ column matrix 
\[
 C(k) := \left( \sum_{j \in [d]} \frac{p_{i_{1} \cdots i_{k-1} j}}{p_{[i_{1} \cdots i_{k-1}]}} \:\rho(a_j) \right)_{i_{1} \cdots i_{k-1}},
\]
which is indexed by strict prefixes of $K$ of length $k-1$.
Given a representative $i_{1} \cdots i_{k-1}$, define the row matrix 
\[
 L(i_{1}, \ldots, i_{k-1}) := \left( q_{i_{1} \cdots i_{k-1} j} \: \rho(a_{j}) \right)_{j}
\]
indexed by all $j$ such that $i_{1} \cdots i_{k-1} j$ is the strict prefix of a representative of $K$. Then use these row matrices to form the  $r_{k-1} \times r_{k}$ diagonal block matrix $D(k)$, whose $i_{1} \cdots i_{k-1}$ diagonal entry is $L(i_{1}, \cdots, i_{k-1})$:
\[
 D(k) = \begin{pmatrix}
         \ddots &  &  \\
          & L(i_{1}, \ldots, i_{k-1}) &  \\
         & & \ddots
        \end{pmatrix}.
\]
Finally combine all the preceding matrices to construct:
\begin{equation}\label{eq:matrix_nonrev}
 \tilde{\cP} := \begin{pmatrix}
  \sum_{i \in [0:d]} p_{i} \rho(a_{i}) & D(1) & 0 & & & 0\\ 
  C(2) & 0 & D(2) & & &\\
  C(3) & 0 & 0 & D(3) & &\\
  \vdots & & & & \ddots &\\
  C(\ell_K-1) & & & & & D(\ell_K-1) \\
  C(\ell_K) & & & & & 0
 \end{pmatrix}
\end{equation}





\begin{ex}
 The construction will certainly be clearer on a concrete example. Consider a random walk on the group $G$ given by the presentation $\langle a,b \: | \: ab=ba \rangle$, which is in fact isomorphic to $\bZ^{2}$. Suppose the random walk has support
 \[
  K = \{e, a,b, a^{2}, a^{2}b \}.
 \]
To avoid large matrices, we forget about the inverses $a^{-1}, b^{-1}$ but they should not be discarded in a general case. 

Because of the relation $ab=ba$, there can be several ways to write an element of $G$ as words in $a$ and $b$, for instance $a^{2}b = aba$. Therefore we fix a representative for each element of the support. In the present case, group elements can be written uniquely as $a^{k}b^{l}$ so it is natural to choice these words as representatives.

Applying the preceding construction, one eventually obtains the following operator matrix:

\[
 \begin{pmatrix}
  p_{e} \rho(e) + p_{a} \rho(a) + p_{b} \rho(b) & q_{a} \rho(a) & 0\\
  \frac{p_{a^{2}}}{p_{[a]}} \rho(a) & 0 & q_{a^{2}} \rho(a) \\
  \frac{p_{a^{2}b}}{p_{[a^{2}]}} \rho(b) & 0 & 0
 \end{pmatrix} 
= \begin{pmatrix}
	p_{e} \rho(e) + p_{a} \rho(a) + p_{b} \rho(b) & (p_{a^{2}} + p_{a^{2}b}) \rho(a) & 0\\
	\frac{p_{a^{2}}}{p_{a^{2}}+p_{a^{2}b}} \rho(a) & 0 & \frac{p_{a^{2}b}}{p_{a^{2}}+p_{a^{2}b}} \rho(a) \\
	\rho(b) & 0 & 0
\end{pmatrix}. 
\]
\end{ex}

\begin{proof}[Proof of Theorem \ref{thm:linearization_nonrev} (non-reversible case)]

On row $i_{1} \cdots i_{k}$, the sum of entries of the matrix $P= \sum \tp_{i}$ is 
\[
 \sum_{j \in [d]} \frac{p_{i_{1} \cdots i_{k-1} j}}{p_{[i_{1} \cdots i_{k-1}]}} + \sum_{j \in[d]} q_{i_{1} \cdots i_{k-1}j} = \frac{1}{p_{[i_{1} \cdots i_{k-1}]}} \sum_{j \in [d]} (p_{i_{1} \cdots i_{k-1} j} + p_{[i_{1} \cdots i_{k-1} j]}) = 1.
\]
Thus $P$ is stochastic and $\tilde{\cP}$ defines indeed a coloured random walk $(Y_{n})_{n \geq 0}$.

Suppose now $Y_{n} = (g_{n},u_{n})$ is started at colour $1$. Then define $(\tau_{n})_{n \geq 0}$ as the successive return times at the first colour: $\tau_{0} := 0$ and for $n \geq 1$
\[
  \tau_{n} := \inf \{ m > \tau_{n-1} , u_{m}=1\}
 \]
By Markov's property, the random variables $\tau_{n} - \tau_{n-1}$ are iid with the same law as $\tau_{1}$.

For all $g = i_{1} \cdots i_{k}$ in $K$, the probability that $Y_{\tau_{1}} = g$ is 
 \begin{align*}
  \bP \sbra{Y_{\tau_{1}} = g} &= q_{i_{1}} \, q_{i_{1} i_{2}} \, \cdots \ q_{i_{1} \cdots i_{k-1}} \ \frac{p_{i_{1} \cdots i_{k}}}{p_{[i_{1} \cdots i_{k-1}]}} \\ 
  &= p_{[i_{1}]} \frac{p_{[i_{1} i_{2}]}}{p_{[i_{1}]}} \cdots \frac{p_{[i_{1} \cdots i_{k-1}]}}{p_{[i_{1} \cdots i_{k-2}]}} \frac{p_{i_{1} \cdots i_{k}}}{p_{[i_{1} \cdots i_{k-1}]}} \\
  &= p_{i_{1} \cdots i_{k}} = p_{g}.
 \end{align*}
 
 By Markov's property and the fact that the increments are iid one easily deduce that $Y_{\tau_{n}} \stackrel{(d)}{=} X_{n}$ for all $n \geq 0$. Finally, the irreducibility of $X_{n}$ implies the quasi-irreducibility of $Y_{n}$.
\end{proof}

\begin{rk}\label{rk:Etau}
 The expectation of the hitting time $\tau_{1}$ is very simple to compute: $Y_{1}$ can go through each $g \in K$ with probability $p_{g}$, in which case it needs $|g|$ steps to get back to the colour $1$. Hence the expectation of $\tau_{1}$ is just the average length of elements in $K$:
\begin{equation*}\label{eq:Etau1}
  \bE \sbra{\tau_{1}} = p_{e} + \sum_{g \in K} p_{g} |g| = p_{e} + \bE [ |X_1| ].
  \end{equation*}
\end{rk}

\begin{rk}\label{rk:infiniteR}
If $\cP$ is not of finite range, then the construction produces a countable set of colours. The construction could nevertheless be useful if the expected time $\bE [\tau_1] = p_{e} + \bE [|X_1|]$ is finite. 
\end{rk}

\subsection{Linearization in the reversible case}

The previous constructions prevent the coloured random walk to be reversible.
To correct this, we again assign new colours to every element of $K$ with the following variation. We assume that for all $g \in K$, the representative of $g = a_{i_1}\cdots a_{i_n}$ is chosen such that the representative of $g^{-1}$ is $a_{i_n}^{-1}\cdots a_{i_1}^{-1}$. We start with the neutral colour $1$ and, for each pair $(g,g^{-1})$ with $|g| = |g^{-1}| \geq 2$, we add $\abs{g} - 1$ new colours $u_{1}(g), \ldots, u_{\abs{g}-1}(g)$ and we set $u_{k} ( g^{-1} ) = u_{|g|-k}(g)$ for all $1\leq k \leq |g|-1$. Suppose $g$ is written  as $g=a_{i_{1}} \cdots a_{i_{k}}$. For all $h \in G$, the transition probability to go from $(h,1)$ to $(h a_{i_{1}}, u_{1}(g))$ is set to some value $\alpha_{g}$ to be determined, such that $\alpha_{g} = \alpha_{g^{-1}}$ and $\sum_{g \in S} \alpha_{g} =1$. All other transition probabilities on the segment joining $(x,1)$ to $(xg,1)$ are set to $1/2$, see Figure \ref{fig:reversible}.

\begin{figure}
\centering
 \begin{tikzpicture}
 \node[fill, circle, label=left:{$(e,1)$}]  (e) at (0,0) {} ;
 
 \node[fill, shape= circle, label=above:{$a_{i_{1}}$}, label= below:{$u_{1}(g)$}] (1) at (2,0) {};
 \node[fill, shape= circle, label=above:{$a_{i_{1}} a_{i_{2}}$}, label= below:{$u_{2}(g)$}] (2) at (4,0) {};
 \node[fill, shape= circle, label=above:{$a_{i_{1}} \cdots a_{i_{k-2}}$}, label= below:{$u_{k-2}(g)$}] (n-2) at (6,0) {};
 \node[fill, shape= circle, label=above:{$a_{i_{1}} \cdots a_{i_{k-1}}$}, label= below:{$u_{k-1}(g)$}] (n-1) at (8,0) {};
 \node[fill, shape= circle, label= right:{$(g,1) \in K \times \{1\}$}] (g) at (10,0) {};
 
%
 \node[coordinate] (h) at (110:1.5) {} ;
 \node[coordinate,label=above left:{$K \times \{1\}$}] (h2) at (135:1.5) {};
 \node[coordinate] (h3) at (-135:1.5) {} ;
 \draw[loosely dashed] (e) edge (h) edge (h2) edge (h3) ;
 
 \draw[->, >=stealth] (e) to[bend left] node[midway,above]{$\alpha_{g}$} (1) ;
 \draw[->,>=stealth](1) to[bend left] node[midway,below]{$1/2$} (e) ;
 \draw[<->, >=stealth] (1) to node[midway,below]{$1/2$} (2) ;
 \draw[<->, >=stealth, dashed] (2) to (n-2) ;
 \draw[<->, >=stealth] (n-2) to node[midway,below]{$1/2$} (n-1) ;
 \draw[->, >=stealth] (n-1) to[bend left] node[midway,above]{$1/2$} (g) ;
 \draw[->,>=stealth](g) to[bend left] node[midway,below]{$\alpha_{g}$} (n-1) ;
 
 \draw[loosely dashed] (g) --++ (45:1.5) ;
 \draw[loosely dashed] (g) --++ (80:1.5) ;
 \draw[loosely dashed] (g) --++ (-45:1.5) ;
\end{tikzpicture}
\caption{The linearizing reversible random walk}
\label{fig:reversible}
\end{figure}
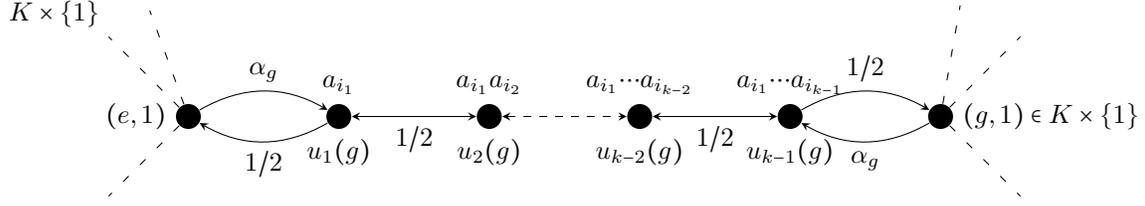

Thanks to the constraint $\sum_{g \in G} \alpha_g = 1$, $ \tilde{\cP} $ is indeed a Markov transition kernel. However, the coloured walk can now make steps that would not have been possible in the naive approach. For instance, it can go from $(e,1)$ to some $(g,u)$ and then come back to $(e,1)$, even if $p_{e} = 0$ for the initial walk. Therefore, the stopping times cannot be taken directly as the return times at colour $1$. Instead define, if $Y_n = (g_n,u_n)$ has transition kernel $\tcP$,
\[
  \tau_{n} := \left\{ \begin{array}{l l}
               \inf \{ m \geq \tau_{n-1} :  u_{m}=1, g_{m} \neq g_{\tau_{n-1}} \} & \text{if $g_{\tau_{n-1}+1} \neq g_{\tau_{n-1}}$} \\
               \tau_{n-1}+1 & \text{if $g_{\tau_{n-1}+1} = g_{\tau_{n-1}}.$}
              \end{array} \right.
 \]
 
Theorem \ref{thm:linearization_nonrev} in the reversible case is now contained in the following lemma.

\begin{lem}
 Suppose $X_{n}$ is a finitely supported random walk on $G$ defined by a probability vector $p$ and started at $e$. Setting for each $g \in K$
  \begin{equation}\label{eq:proba_reversible}
  \alpha_{g} := \left\{ \begin{array}{l l} 
                         \frac{(1-p_{e})|g| p_{g}}{\sum_{h \in K} |h| p_{h}} & \text{if $g \neq e$} \\
  p_{e} & \text{if $g = e$}
                        \end{array}\right.
 \end{equation}
the preceding construction yields an operator $\tilde{\cP}$ which defines a reversible  coloured random walk $(Y_{n})_{n \geq 0}$ satisfying $Y_{\tau_{n}} \stackrel{(d)}{=} X_{n}$ for all $n \geq 0$ if started at $(e,1)$.
\end{lem}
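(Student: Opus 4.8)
The plan is to analyze $(Y_n)$ directly as a nearest-neighbor walk on the subdivided graph implicitly described by \eqref{eq:matrix_rev} and to prove the two assertions separately. I would first record that the reversible case is the one where $\cP$ is reversible, i.e. $p_g=p_{g^{-1}}$; together with $|g|=|g^{-1}|$ this is exactly what makes $\alpha_g=\alpha_{g^{-1}}$ in \eqref{eq:proba_reversible}. One checks stochasticity at the outset: $\alpha_e=p_e$, while the terms with $g\neq e$ sum to $(1-p_e)\frac{\sum_g|g|p_g}{\sum_h|h|p_h}=1-p_e$ (using $|e|=0$), so $\sum_{g\in K}\alpha_g=1$ and $\tcP$ is a Markov kernel. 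Geometrically, $G\times[r]$ decomposes into the color-$1$ sites $(h,1)$, joined, for each unordered pair $\{g,g^{-1}\}$ with $|g|\ge 2$, by a path of length $|g|$ through intermediate vertices carrying the colors $u_j(g)$, with direct edges for length-one generators; the matrices $A_g,A_{g^{-1}}$ encode the same undirected segment.

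For reversibility I would exhibit the invariant measure and verify detailed balance. Set $\pi(1):=c$ and $\pi(u_j(g)):=2\alpha_g c$ for every intermediate color, with $c>0$ normalizing $\pi$ on $[r]$ (possible since the colors are finite and all weights positive). It then suffices to check $\pi(u)\tp_s(u,v)=\pi(v)\tp_{s^{-1}}(v,u)$ for $s\in S$ on each edge type of a segment joining $(h,1)$ to $(hg,1)$. On the two boundary edges the forward weight is $\alpha_g$ and the backward weight is $1/2$, and $\pi(1)\alpha_g=(2\alpha_g c)\tfrac12$ gives balance; on the interior edges both weights are $1/2$ and the endpoints share the mass $2\alpha_g c$; on the length-one and lazy edges at color $1$ the identity reduces to $\alpha_s=\alpha_{s^{-1}}$ and $p_e=p_e$. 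Since detailed balance holds, $\pi$ is invariant and $\tcP$ is reversible.

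For the identity in law it is enough, by the strong Markov property at the $\tau_n$ together with the left-invariance of the transition rule (each excursion out of a color-$1$ site has the law of a left translate of one from $(e,1)$), to show $\bP[Y_{\tau_1}=g]=p_g$ when $Y_0=(e,1)$; the increments $\tau_{n+1}-\tau_n$ are then i.i.d. and $Y_{\tau_n}^{-1}Y_{\tau_{n+1}}$ is i.i.d. with law $p$, giving $Y_{\tau_n}\stackrel{(d)}{=}X_n$. For $g=e$ the only contribution is a lazy first step, of probability $\alpha_e=p_e$. For $g\neq e$ the first step must be non-lazy, after which the walk performs i.i.d. rounds from $(e,1)$ (returns to $(e,1)$, including lazy self-loops there, being renewals by the Markov property), each round ending either at a new color-$1$ site or back at $(e,1)$. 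The key point is that once the walk enters the segment toward a given $g=a_{i_1}\cdots a_{i_k}$ it runs a simple symmetric random walk on $\{0,1,\dots,k\}$ absorbed at $0=(e,1)$ and $k=(g,1)$ — it cannot escape, since the interior colors $u_j(g)$ occur only on this segment — so by gambler's ruin it reaches $(g,1)$ before returning with probability $1/k$. Hence the per-round weight of succeeding at $g$ is $\frac{\alpha_g}{1-p_e}\cdot\frac1k=\frac{p_g}{\sum_h|h|p_h}$, the factor $k=|g|$ in $\alpha_g$ cancelling the ruin factor $1/k$ (the same value $\frac{p_{a_i}}{\sum_h|h|p_h}$ arising for length-one edges, with no ruin factor). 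Summing over $g\neq e$ gives total per-round success weight $\frac{1-p_e}{\sum_h|h|p_h}$, so the first success is at $g$ with conditional probability $\frac{p_g/\sum_h|h|p_h}{(1-p_e)/\sum_h|h|p_h}=\frac{p_g}{1-p_e}$; multiplying by the probability $1-p_e$ that the first step is non-lazy yields $\bP[Y_{\tau_1}=g]=p_g$.

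Finally the renewal property needs $\bE[\tau_1]<\infty$: each round is one entering step plus a gambler's-ruin excursion on a path of length at most $\ell_K$, hence of finite expected duration, the number of rounds is geometric with parameter $\frac{1-p_e}{\sum_h|h|p_h}>0$, and (for $p_e<1$, the nondegenerate case) the lazy self-loops per visit to $(e,1)$ have finite mean; the exact value is given in Remark~\ref{rk:Etaurev}. I expect the main obstacle to be the excursion computation — decomposing the dynamics into i.i.d. rounds, confining each segment excursion to its own path of intermediate colors, and running the gambler's ruin so that the length factor built into $\alpha_g$ exactly cancels the absorption probability $1/|g|$. The reversibility statement, by contrast, is a routine detailed-balance check once the correct $\pi$ is guessed.
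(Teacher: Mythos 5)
Your proof is correct and follows essentially the same route as the paper: verify $\sum_g \alpha_g=1$, exhibit the reversible measure giving mass $1$ to color $1$ and $2\alpha_g$ to the intermediate colors $u_j(g)$, and compute $\bP[Y_{\tau_1}=g]$ via gambler's ruin on each segment combined with the renewal structure at returns to $(e,1)$, treating the lazy step at $e$ separately. Your geometric-trials bookkeeping is just an unrolled version of the paper's one-step fixed-point equation $\bP[Y_{\tau_1}=g]=\alpha_g/|g|+\sum_h\alpha_h(1-1/|h|)\bP[Y_{\tau_1}=g]$, and your explicit detailed-balance check (including the observation that it requires $\alpha_s=\alpha_{s^{-1}}$, hence $p_s=p_{s^{-1}}$) fills in a step the paper leaves to the reader.
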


\begin{proof}
As already pointed, the construction defines a coloured random walk as long as $\sum_{g \in S} \alpha_{g} =1$, which is true for $\alpha_{g}$ defined as in \eqref{eq:proba_reversible}. Furthermore such a coloured walk is reversible: for all $(g,u) \in G \times [r]$, letting
\[
 \mu(g,u) = \left\{ \begin{array}{l l}
             1 & \text{if $u=1$} \\
             2 \, \alpha_{h} & \text{if $u=u_{h}(k)$ for some $k < \abs{h}$}.
            \end{array}\right.
\]
defines a reversible measure for the random walk, as it can be checked directly.

Consider now the coloured walk $(Y_{n})_{n \geq 0}$ defined by \eqref{eq:proba_reversible}, started at $Y_{0}=(e,1)$. Suppose first that $p_{e}=0$, so that $\tau_{1}$ is the hitting time of the set $(K \smallsetminus \{e\}) \times \{1\}$. 
 
At step $1$ the coloured walk necessarily enters the segment joining $(e,1)$ to $(h,1)$ for some $h \in K$.  If the walk escapes this segment at $h$, then $Y_{\tau_{1}} = h$. Otherwise, it necessarily goes back to the starting state $(e,1)$. Now on each of these segments, the coloured walk is simply performing a simple random walk so the escape probabilities are given by the standard gambler's ruin problem. Namely the simple random walk on $[0:n]$ reaches $n$ before it gets to $0$ with probability $k/n$ when started at $k \in [0:n]$. Therefore by Markov's property
 \[
  \bP_{(e,1)}\sbra{Y_{\tau_{1}} = g}= \alpha_{g}/\abs{g} + \sum_{h \in K} \alpha_{h}(1-1/\abs{h}) \bP_{(e,1)}\sbra{Y_{\tau_{1}}=g}.
 \]
We deduce that 
\[
 \bP_{(e,1)}\sbra{Y_{\tau_{1}} = g} = \frac{\alpha_{g} / \abs{g}}{\sum_{h \in K} \alpha_{h} / \abs{h}} = p_{g}.
\]
 
For the general case $p_{e} \neq 0$, $Y_{\tau_{1}} = (e,1)$ if and only if $Y_{1} = (e,1)$ which occurs with probability $\alpha_{e} = p_{e}$. For $g \neq 0$, consider the random walk conditionned to move at every step, written $(Y'_{n})_{n \geq 0}$. If $q(x,y)$ is the transition probability for $Y_{n}$ between two states $x$ and $y$, then the transition probability for $Y'_{n}$ is $0$ if $y=x$ and $q(x,y)/(1-q(x,x))$ otherwise. The previous argument apply to this walk, so
\[
 \bP_{(e,1)}\sbra{Y'_{\tau'_{1}} = g} = \frac{\alpha_{g} / ((1-p_{e})\abs{g})}{\sum_{h \in K \smallsetminus \{e\}} \alpha_{h} / ((1-p_{e})\abs{h})} = \frac{p_{g}}{1-p_{e}},
\]
with $\tau'_{1}$ is the obvious extension of $\tau_{1}$ to $Y'_{n}$.
Coming back to $Y_{\tau_{1}}$: to reach $g \neq e$ it it is necessary that $Y_{1} \neq e$, which occurs with probability $(1-p_{e})$. Conditional on that event it no longer matters whether the random walk comes back at $e$ and possibly stays there, so one can reason with $Y'_{n}$ instead. Hence
\[
 \bP_{(e,1)}\sbra{Y_{\tau_{1}} = g} = (1-p_{e}) \bP_{(e,1)}\sbra{Y'_{\tau'_{1}} = g} = p_{g}.
\]
The conclusion follows.
\end{proof}

\begin{rk}\label{rk:Etaurev}
 The expected time $\bE\sbra{\tau_{1}}$, although not as simple as in the non-reversible case, can nonetheless be computed quite easily using for instance the electric network paradigm. Using \cite[Prop 2.20]{lyons2017probability}, we found in the case $p_{e} = 0$
  \begin{equation}
   \bE\sbra{\tau_{1}} = \sum_{g \in K} p_{g} \abs{g}^{2} = \bE \abs{X_{1}}^{2}.
  \end{equation}
\end{rk}

\begin{rk}\label{rk:bdr}
In the naive approach, we have that the total number of colours is 
$$
r = 1 + \sum_{g \in K} (|g| -1).
$$
In the reversible construction, there is a factor $1/2$ on front of the sum (because we use the same colours for $g$ and $g^{-1}$). In the improved construction of Subsection \ref{subsec:linsuff}, this is an upper bound, the actual value is $r = 1 + \mathrm{Card}(K') - \mathrm{Card}(K)$ where $K'$ is the set of words which are a suffix of some element in $K$ (in the chosen representatives). In a concrete application, it is often possible to design  linearization procedures which are more economical in terms of number of colours.
\end{rk}

\section{Coloured random walks on free products of groups}\label{section:coloured}

In this section we define and study the harmonic measure for coloured random walks on such groups, proving Theorem \ref{thm:harmonic}. 
Explicit descriptions of the harmonic measure on free groups were obtained in \cite{kaimanovich1983random,derriennic1980quelques,ledrappier2001some}. Similar approaches can be carried out for free products of finite groups, of monoids, of finite alphabets \cite{MR2378433, gilch2011asymptotic,mairesse2007random}. 

The main ingredient in the study of random walks on groups is the invariance by translation of the transition probabilities. This $G$-invariance of transition probabilities still holds for coloured random walks, allowing many arguments to extend to this setting. Indeed, once a good way of taking colours into account is found, one can essentially replace scalar parameters (such as $p_{g}, \mu_{g}$, etc.) by matrices. The argumentation for convolution random walks can then be extended in a straightworward way as long as the computations are consistent with matrix products, which is the case most of the time. For this reason, we will often sketch or omit the proofs when they are too similar to already known arguments. We note however that detailed proofs are available in the first version of this paper \cite{bordenave2020markovian}.


The case of free groups and free products of finite groups is very similar in nature and they can be handled together as done in \cite{jean2005random}.
 Let $G_{1}, \ldots, G_{m}$ be finite groups and consider the group $G = \Fd \ast G_{1} \ast \cdots \ast G_{m}$, with the set of generators \eqref{eq:generators}. 
 Recall the definition of the map $\Next$ in \eqref{eq:next}. Every element $g \in G$ writes uniquely as a word $g=g_{1} \cdots g_{n}$ with $n = \abs{g}$ and $g_{i} \in S, g_{i+1} \in \Next(g_{i})$ for all $i$. Such words will be called reduced.  

\subsection{The harmonic measure on the boundary}

Consider a coloured random walk $(X_{n})_{n \geq 0}$ on $G \times [r]$ defined by a family $(p_g)$ of matrices. We assume that the walk is quasi-irreducible and nearest-neighbour. 
We assume furthermore that $G$ is not isomorphic to $\bZ$ or $\bZ/2\bZ \ast \bZ/2\bZ$. This implies the coloured walk is transient, as shown by Proposition \ref{prop:ex+uni_stationary} below. 

Define the boundary $\partial G$ as 
\[
 \partial G := \left\{ \xi_{0}\xi_{1} \cdots \xi_{n} \cdots , \forall i \geq 0, \xi_{i} \in G, \xi_{i+1} \in \Next(\xi_{i}) \right\}.
\]
The multiplication action by $G$ on itself can be extended to $\partial G$: given $g \in G$, $\xi = \xi_{0} \xi_{1} \cdots \in \partial G$, define
\begin{equation}\label{eq:defgxi}
 g \xi := \left\{ \begin{array}{l l}
   g \xi_{0} \xi_{1} \cdots & \text{if $\xi_{0} \in \Next(g)$} \\                     
   (g \xi_{0}) \xi_{1} \cdots & \text{if $g \xi_{0} \in S$} \\              
   \xi_{1} \xi_{2} \cdots & \text{if $g = \xi_{1}^{-1}$.} 
 \end{array}\right. 
\end{equation}
The boundary $\partial G$, which can be seen as a subset of $G^{\bN}$, is equipped with the product topology and the corresponding $\sigma$-algebra. Given a measure $\nu$ on $\partial G$, let $g \cdot \nu$ be the image measure of $\nu$ under the mutiplication by $g$, that is the measure defined by the fact that 
\[
 \int f(\xi) \: d (g \cdot \nu)(\xi) := \int f( g \xi) \: d \nu(\xi),
\]
for all bounded measurable function $f$ on $\partial G$. 

\begin{Def}
 Consider a family $\nu = (\nu_{u})_{u \in [r]}$ of probability measures on $\partial G$ indexed by colours. $\nu$ is said to be stationary if for all $u \in [r]$,
\begin{equation}\label{eq:defstat}
 \nu_{u} = \sum_{g \in G, v \in [r]} p_{g}(u,v) \: g \cdot \nu_{v}.
\end{equation}
\end{Def}

From now on, all measures considered on $\partial G$ will be implicitely indexed by colours.
The following result extends Theorem 1.12 in \cite{ledrappier2001some} to coloured random walks. The proof is exactly the same and will thus be omitted.

\begin{prop}\label{prop:ex+uni_stationary}
 There exists a random variable $X_{\infty} \in \partial G$ such that $X_{n}$ converges a.s.\,to $X_{\infty}$. The law of $X_{\infty}$ is called the harmonic measure and is the unique stationary measure on the boundary $\partial G$. It will be denoted $(p^{\infty}_{u})_{u \in [r]}$ where the index $u$ is to be interpreted as the starting colour of $(X_{n})_{n \geq 0}$.
\end{prop}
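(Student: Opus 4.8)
The plan is to prove existence of the almost-sure limit $X_\infty$ first, and then establish that its law is stationary and that stationarity characterizes it uniquely. Since $G$ is a plain group and not $\bZ$ or $\bZ/2\bZ \ast \bZ/2\bZ$, the colored walk is transient, so $\abs{X_n} \to \infty$ almost surely. The key geometric feature of plain groups is that the reduced-word representation is a tree structure: once the walk settles on a prefix it tends not to erase it. Concretely, I would argue that because each step multiplies by a single generator in $S$, the word length $\abs{X_n}$ changes by at most one at each step, and transience forces $\abs{X_n}\to\infty$; combined with the fact that a step can cancel at most the last letter, one shows that the sequence of prefixes of $X_n$ stabilizes letter by letter. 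More precisely, for each $k$ the $k$-th letter of $X_n$ is eventually constant almost surely, which defines a random limiting infinite reduced word $X_\infty = \xi_0 \xi_1 \cdots \in \partial G$. This is the step that uses the tree structure of the plain group most essentially.

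Next I would show the harmonic measure is stationary. Decomposing according to the first step of the walk, under $\bP_u$ the walk goes to color $v$ using generator $g$ with probability $p_g(u,v)$, and from there the remaining trajectory $(X_1^{-1}X_n)_n$ is a copy of the colored walk started at $(e,v)$, whose limit has law $p^\infty_v$. The limit $X_\infty$ of the original walk is then $g$ times the limit of the shifted walk, so conditioning on the first step gives
\[
 p^\infty_u = \sum_{g \in G,\, v \in [r]} p_g(u,v)\; g \cdot p^\infty_v,
\]
which is exactly the stationarity equation \eqref{eq:defstat}. Here one must be slightly careful that the action $g \cdot \xi$ defined in \eqref{eq:defgxi} correctly describes how left multiplication by $g$ transforms the limiting boundary point of the shifted walk; this is a direct check against the three cases in the definition of $g\xi$.

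For uniqueness, suppose $\nu = (\nu_u)_u$ is any stationary colored measure. The standard approach is a martingale/convergence argument: stationarity means that $\nu_{u}$ is invariant under the averaged action of one step of the walk, so iterating \eqref{eq:defstat} expresses $\nu_u$ as the expectation over $n$ steps of the pushed-forward measures $X_n \cdot \nu_{u_n}$. Since $X_n \to X_\infty$ almost surely and $X_n \cdot \nu_{u_n}$ concentrates on the limiting boundary point $X_\infty$ (the pushforward of any fixed measure by $X_n$ converges weakly to the Dirac mass at $X_\infty$, again by the tree structure: multiplying by a long word fixes more and more initial letters), one obtains that $\nu_u$ must equal the law of $X_\infty$ under $\bP_u$, namely $p^\infty_u$. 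The main obstacle is the last convergence statement: one needs that for a.e.\ trajectory, $X_n \cdot \nu_{u_n} \Rightarrow \delta_{X_\infty}$, i.e.\ that left multiplication by the diverging word $X_n$ crushes any probability measure on $\partial G$ onto a point. This follows because for large $n$ the word $X_n$ shares a long common prefix with $X_\infty$ and multiplication on the left only affects the leading letters of a boundary point, so all the mass is pushed onto boundary points agreeing with $X_\infty$ on an arbitrarily long prefix. I would justify this using the explicit formula \eqref{eq:defgxi} and the almost-sure stabilization of prefixes established in the first step. Since the proof is stated to be identical to Ledrappier's, I would follow that reference for the routine estimates and emphasize only the adaptation to the matrix-valued weights $p_g(u,v)$ and the color bookkeeping, which enters merely through summing over $v \in [r]$ and tracking the color coordinate $u_n$, and does not affect the geometric core of the argument.
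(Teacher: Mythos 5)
Your overall architecture differs from the paper's: you build $X_\infty$ directly from transience plus letter-by-letter stabilization of prefixes and only then produce/identify the stationary measure, whereas the paper first gets \emph{some} stationary measure $\nu$ abstractly (compactness of $\mathcal{P}(\partial G)$ plus Schauder--Tychonoff), and then uses the bounded martingale $X_n\cdot\nu$ to simultaneously prove that $\abs{X_n}\to\infty$, that $X_n$ converges to a boundary point, and that $\nu$ is the law of that point. This difference matters because your very first step assumes what the paper is careful \emph{not} to assume: transience. The paper explicitly states (just before the proposition) that transience of the colored walk ``will be proved in particular in Proposition~\ref{prop:ex+uni_stationary}.'' For a quasi-irreducible \emph{colored} walk the $G$-marginal is not a Markov chain and the walk need not even be irreducible on $G\times[r]$, so the standard ``non-amenable $\Rightarrow$ transient'' criterion for convolution walks does not apply off the shelf; you would need a separate argument (e.g.\ inducing on return times to a color), and you give none. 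The paper instead derives $\abs{X_n}\to\infty$ from the convergence of the martingale $X_n\cdot\nu$ together with continuity and full support of $\nu$ and a Borel--Cantelli argument using quasi-irreducibility. As written, your existence step is circular relative to the paper's logical structure.

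The second gap is in uniqueness. You claim that ``left multiplication by the diverging word $X_n$ crushes any probability measure on $\partial G$ onto a point.'' This is false for arbitrary measures: left multiplication by $X_n$ cancels against the initial segment of a boundary point $\xi$, and the set of $\xi$ cancelling at least $m$ letters is a cylinder of length $m$ whose $\nu$-mass need not be small if $\nu$ has atoms (take $\nu=\delta_{\xi_0}$ with $\xi_0$ adversarially aligned with the suffixes of $X_n$). The concentration $X_n\cdot\nu\Rightarrow\delta_{X_\infty}$ genuinely requires that the stationary $\nu$ be continuous (non-atomic) with full support, which the paper establishes first via minimality of the $G$-action on $\partial G$ (density of orbits, excluding $\bZ$ and $\bZ/2\bZ\ast\bZ/2\bZ$). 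Your proposal skips this step entirely and the deferral to ``routine estimates'' in Ledrappier does not cover it, since it is precisely the non-routine structural input. The stationarity computation via the first-step decomposition is fine and matches the paper.
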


\subsection{Markovian structure of the harmonic measure}

A measure on $\partial G$ is uniquely determined by its mass on cylinders. Given a measure $\nu$, we will write $\nu(\xi_{1} \cdots \xi_{n})$ for the mass of the cylinder containing all infinite words which start with the prefix $\xi_{1} \cdots \xi_{n}$.

In the colourless case, the tree structure of the group $G$ implies the harmonic measure is Markovian. It can be computed entirely from the solutions of a set of equations derived from the stationarity of the harmonic measure with the following interpretation.

For all $k \geq 0$, let $X_{\infty}^{(k)}$ be the restriction of $X_{\infty}$ to the first $k$-th letters. 
Thus for all $k \geq 1$ the mass under the harmonic measure of cylinders of size $k$ is given by the law of $X_{\infty}^{(k)}$. 
On the other hand $(X_{\infty}^{(k)})_{k \geq 1}$ is a non-backtracking walk which one can interpret as the loop-erased random walk formed from $(X_{n})_{n \geq 0}$. In the standard colourless setting, the tree structure of the group makes the loop-erased random walk a Markov chain whose transition probabilities can be computed easily.

In the coloured setting, one can expect to have similar properties but the loop-erased random walk of the process is no longer a Markov chain so this is not the right process to consider.
Instead let $\tau_{g} := \inf \{ n \geq 0, X_{n} = ( g, \cdot ) \}$ be the hitting time of $g$ by the random walk $X_n$ and $u_{k}$ the colour at time $\tau_{X_{\infty}^{(k)}}$. 
Given $g \in S$, set 
\begin{equation}\label{eq:defmug}
 \mu_{g}(u,v) := \bP_{(e,u)}[X_{\infty}^{(1)} = g, X_{\tau_{g}}=(g,v)].
\end{equation}
In words, $\mu_{g}(u,v)$ is the probability that the random walk, starting from $(e,u)$, visits $g$, with colour $v$ for the first time, and later escapes at infinity in direction $g$. 
The process considered which is the equivalent of the loop erased random walk in the coloured setting is $(X_{\infty}^{(k)},u_{k})_{k \geq 1}$, which is a coloured Markov chain with increment distribution $\mu$.

\begin{proof}[Proof of Theorem \ref{thm:harmonic}]
 Most of the proof is similar to the case of colourless walks, so we will only emphasize what is different. Details can be found in \cite{ledrappier2001some, jean2005random, mairesse2007random}.
 
 Recall Definition \eqref{eq:defDg} and that $\Delta_{g} := \Delta(\mu)_{g}$. Given the definition of the matrices $\mu_{g}$, Equation \eqref{eq:law_lerw} is an easy consequence of Markov's property and the $G$-invariance of the transition probabilites of the random walk. 
 
 Let us now prove now that the matrices $\mu_{g}$ are characterized by Equation \eqref{eq:traffic}. 
 Consider any family $(\nu_{g})_{g \in S}$ of non-negative matrices, solutions of \eqref{eq:traffic} and such that $\sum_{g \in S} \nu_{g}$ is a stochastic matrix. Then for all $u \in [r]$ define the measure $\nu^{\infty}_{u}$ on $\partial G$ by setting for each cylinder $\xi_{1} \cdots \ \xi_{n}$,
 \[
  \nu^{\infty}_{u} = \II_{u} \nu_{\xi_{1}}   \Delta(\nu)_{\xi_1} ^{-1}  \nu_{\xi_{2}} \cdots  \Delta(\nu)_{\xi_{n-1}}  ^{-1} \nu_{\xi_{n}} \II
 \]
 The fact that $\sum_{g \in S} \nu_{g}$ is a stochastic matrix proves that $\nu^{\infty}_{u}$ is indeed a probability measure on $\partial G$. 

 Mimicking the proof of the case of colourless walks, one can deduce that $\nu^{\infty}$ is stationary, which by uniqueness of stationary measures yields that $\nu^{\infty}_{u} = p^{\infty}_{u}$ for all $u \in [r]$. 
  
 At this stage we only proved that for all $g \in S, u \in [r]$, $p^{\infty}_{u}(g) = \sum_{v \in [r]} \mu_{g}(u,v) = \sum_{v \in [r]} \nu_{g}(u,v)$. To deduce $\mu_{g} = \nu_{g}$, we observe that the previous equality shows $\Delta_{g} = \Delta(\nu)_{g}$ for all $g \in S$. Consequently the matrices $\mu_{g}$ and $\nu_{g}$ can be seen as the fixed points of a quadradic map $f: M=(M_{g})_{g \in S} \mapsto (f(M)_{g})_{g \in S}$ from the set of non-negative matrices to itself, where
 \begin{equation*}\label{eq:deffM}
  f(M)_{g} :=  A_{g} + \sum_{\substack{h,h' \in S \\ hh' = g}} p_{h} M_{h'} + \sum_{h \in \Next(g)} p_{h^{-1}} M_{h} B_{g} M_{g},
 \end{equation*}
 for all $g \in S$, and $A_{g}, B_{g}$ are non-negative matrices that do not depend on $M$. 
 
 By the fact that $f$ is quadratic, one can now use monotonicity arguments as in the proof of \cite[Lem. 4.7]{jean2005random} to deduce that $\mu_g = \nu_g$.
\end{proof}

\paragraph{Hitting probabilities.}

Let us discuss here another way to compute the harmonic measure through hitting probabilities.  It is an easy extension to the coloured case of equations that can be found for classical nearest-neighbour random walks in \cite{ledrappier2001some, jean2005random, mairesse2007random} but here the equations become matrix equations.

Recall Definition \eqref{eq:defqg}. The motivation for introducing these new matrices is that they can be easily related to the matrices $\mu_{g}$ by
\begin{equation}\label{eq:mu=qD}
	\mu_{g}(u,v) = q_{g}(u,v)  \Delta_{g}(v,v), \: \quad \forall g \in S.
\end{equation}

This gives yet another way to write equations \eqref{eq:law_lerw} and \eqref{eq:harmonic}:
\begin{eqnarray}
	\bP_{u} \sbra{X_{\infty}^{(k)} = \xi_{1} \cdots \xi_{k}, u_{k} = v} & = &\left( q_{\xi_{1}} \cdots q_{\xi{k-1}} \mu_{\xi_{k}} \right)(u,v), \label{eq:law_lerw2} \\
	p_{u}^{\infty}(\xi_{1} \cdots \xi_{k}) & = & \sum_{v} \left( q_{\xi_{1}} \cdots q_{\xi{k-1}} \mu_{\xi_{k}} \right)(u,v) \label{eq:harmonic2}.
\end{eqnarray}

Furthermore, using \eqref{eq:mu=qD}, the traffic equation \eqref{eq:defmug} can be rewritten as
\begin{equation}\label{eq:trafficq}
	\mu_{g} = p_{g}  \Delta_{g}  + \sum_{\substack{h,h' \in S \\ hh' = g}} p_{h} \mu_{h'} + \sum_{h \in \Next(g)} p_{h^{-1}} q_{h} \mu_{g}, \quad \forall g \in S.
\end{equation}

Observe that $y_g := \Delta_g \II = \mu_g \II$ and $y_g \in \bR^r$ is equal to the diagonal of the diagonal matrix  $\Delta_{g}$. By construction, we have that $\sum_g y_g = \II$. If we evaluate the matrix equation \eqref{eq:trafficq} on the vector $\II$, we obtain a linear equation for $y = (y_g)_{g\in S}$, seen as a  vector with coordinates in $\bR^r$, of the form $T y = 0$ where $T$ is matrix on $S \times S$ with matrix-valued coefficients in $M_r(\bR)$. Finally, once, $\Delta_g$ is known, Equation \eqref{eq:trafficq} becomes linear in $\mu$ seen as a vector of matrices. Therefore the matrices $\mu_{g}$ can be completely recovered from the $q_{g}$. 

On the other hand, the latter satsfy a simpler quadratic equation: conditioning on $X_{1}$ and applying Markov property, one obtains
\begin{equation}\label{eq:hitting_probabilities}
 x_{g} = p_{g} + \sum_{\substack{h,h' \in S \\ h h' = g}} p_{h}x_{h'} + \sum_{h \in \Next(g)} p_{h^{-1}} x_{h} x_{g} .
\end{equation}
Arguing as in the proof of Theorem \ref{thm:harmonic}, one can prove this equation characterizes the $q_g$ as follows:

\begin{lem}\label{le:hitprob} The family $q = (q_{g})_{g \in S}$ is the unique solution to Equation \eqref{eq:hitting_probabilities} among family of matrices $(m_g)_{g\in S}$ such that $\mu_{g}(u,v) \leq m_{g}(u,v) \leq 1$  for all $u,v \in [r]$.
\end{lem}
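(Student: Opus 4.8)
The plan is to prove the two assertions separately: that $q$ is a solution lying in the admissible range $[\mu,\mathbf 1]$, and that it is the only one there.

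For existence I would verify that $q=(q_g)_{g\in S}$ solves \eqref{eq:hitting_probabilities} by a first-step analysis. Starting from $(e,u)$ and conditioning on $X_1=(s,w)$, the strong Markov property together with the tree structure of the plain group splits the event $\{\tau_g<\infty,\ \tau_g=\tau_{(g,v)}\}$ into three mutually exclusive scenarios that match the three terms of \eqref{eq:hitting_probabilities}: the walk may step directly onto $g$ (giving $p_g$); it may step to a generator $h$ with $g=hh'$, $h'\in S$ in a common finite factor, and then reach $g$ as if reaching $h^{-1}g=h'$ from $e$ (giving $\sum_{hh'=g}p_h q_{h'}$); or it may first step to some $h^{-1}$ with $h\in\Next(g)$, return to $e$ (an excursion of probability $q_h$ after translation), and only then reach $g$ (giving $\sum_{h\in\Next(g)}p_{h^{-1}}q_h q_g$). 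The same description gives $0\le q_g(u,v)\le1$, and since the event defining $\mu_g(u,v)$ in \eqref{eq:defmug} is contained in the one defining $q_g(u,v)$, we get $\mu_g\le q_g$. Hence $q\in[\mu,\mathbf 1]$.

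For uniqueness I would first record that the right-hand side of \eqref{eq:hitting_probabilities}, viewed as a map $f$ on families of non-negative matrices, is non-decreasing for the entrywise order and satisfies $f(0)\ge0$. Thus the iterates $f^{\circ n}(0)$ increase to the minimal non-negative solution, which one identifies with $q$ by recognizing $f^{\circ n}(0)_g$ as the probability of reaching $g$ along trajectories of bounded excursion depth in the decomposition above, increasing to the full hitting event. Consequently every non-negative solution $m$ satisfies $q\le m$, so any admissible $m$ obeys $q\le m\le\mathbf 1$ (the lower bound $\mu\le m$ is then automatic, since $\mu\le q\le m$). It remains to prove $m\le q$. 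Writing $d_g:=m_g-q_g\ge0$ and subtracting the two instances of \eqref{eq:hitting_probabilities}, the bilinear term factors as $m_h m_g-q_h q_g=(m_h-q_h)m_g+q_h(m_g-q_g)$, so $d=(d_g)$ solves the homogeneous linear system
\[
 d_g = \sum_{\substack{h,h'\in S\\ hh'=g}} p_h\, d_{h'} + \sum_{h\in\Next(g)} p_{h^{-1}}\, d_h\, m_g + \sum_{h\in\Next(g)} p_{h^{-1}}\, q_h\, d_g,
 \qquad g\in S.
\]
This exhibits $d$ as a bounded ($d\le m\le\mathbf 1$) non-negative fixed point of a non-negative linear operator $\mathcal L$ built from the one-step kernel and the return matrices $q_h$. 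The iterate $\mathcal L^{\circ k}d$ represents the contribution of trajectories that postpone the relevant hitting events through at least $k$ successive excursions away from the target; because the walk is transient (Proposition \ref{prop:ex+uni_stationary}), each such excursion carries a uniformly positive chance of escaping to infinity, so $\mathcal L$ is strictly sub-stochastic and $\mathcal L^{\circ k}d\to0$. Since $d=\mathcal L^{\circ k}d$ for every $k$, this forces $d=0$, i.e. $m=q$.

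The main obstacle is precisely this last step: ruling out a non-zero bounded solution of the homogeneous system, equivalently showing that an admissible $m$ cannot assign spurious mass to trajectories escaping to infinity. This is where the upper bound $m\le\mathbf 1$ (boundedness of the defect) and transience genuinely enter, and it is the hitting-probability counterpart of the uniqueness of the harmonic measure proved in Proposition \ref{prop:ex+uni_stationary}. I expect the technical heart to be making ``strictly sub-stochastic'' quantitative — for instance by producing a strictly $\mathcal L$-superharmonic majorant out of transience, or by a direct Borel--Cantelli estimate controlling the excursion depths — so as to guarantee $\mathcal L^{\circ k}d\to0$.
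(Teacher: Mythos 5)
The existence half of your argument (first-step decomposition showing that $q$ solves \eqref{eq:hitting_probabilities}, and $\mu_g\le q_g\le 1$ because the event in \eqref{eq:defmug} is contained in the hitting event) is correct and is exactly what the paper does in the sentence preceding the lemma. For uniqueness, however, you take a different route from the paper — which simply reduces to the monotone fixed-point argument of Lemma~\ref{lem:uniqueness_mu}: given two admissible solutions, their entrywise minimum $n$ satisfies $f(n)\le n$ for the non-decreasing quadratic map $f$ defined by the right-hand side of \eqref{eq:hitting_probabilities}, Brouwer's theorem then produces a fixed point below both, and the constraints force equality — and your route has a gap that you correctly identify but cannot be closed in the form you propose.

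The problem is the final step, $\mathcal{L}^{\circ k}d\to 0$. The operator $\mathcal{L}$ you build depends on $m$ through the term $\sum_{h\in\Next(g)}p_{h^{-1}}d_h\,m_g$, and when $m$ is allowed to have entries equal to $1$ it genuinely admits non-zero, non-negative, bounded fixed points; no sub-stochasticity is available. Concretely, take $r=1$, $G=\Fd$ nearest-neighbor with $p_e=0$: then the constant family $m_g\equiv 1$ solves \eqref{eq:hitting_probabilities}, since the right-hand side becomes $p_g+\sum_{h\neq g^{-1}}p_{h^{-1}}=1$. For this choice of $m$ one checks directly, using $q_g=p_g+\big(\sum_{h\neq g^{-1}}p_{h^{-1}}q_h\big)q_g$, that $d_g=1-q_g>0$ satisfies $\mathcal{L}d=d$; so the iterates do not tend to $0$ and the sup-norm estimate only ever gives $\|\mathcal{L}d\|_\infty\le\|d\|_\infty$ without strict inequality. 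The same example shows that your remark that ``the lower bound $\mu\le m$ is then automatic'' discards the one hypothesis that must carry the argument: an approach to uniqueness that uses only $q\le m\le 1$ cannot succeed, because it would have to exclude $m\equiv 1$, which does lie in that window. Any correct proof has to exploit the lower bound $\mu\le m$ (equivalently, a rigidity statement tying the row sums or the products $q_{\xi_1}\cdots q_{\xi_n}$ back to the harmonic measure, as in Lemma~\ref{lem:uniqueness_mu}) in an essential way; transience alone, encoded only through $q<1$, is not enough.
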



\subsection{Solving the traffic equations}
\label{subsec:trafficfree}

We now propose methods to solve the equations of the previous sections. For a general free product of groups, we show how solutions can be computed numerically. When $G= \Fd \ast \bZ / 2 \bZ \ast \cdots \ast  \bZ / 2 \bZ $, these equations can be solved explicitely in the case of classical nearest-neighbour random walks. For coloured random walks, complications arise due to matrix equations, so we can give explicit solutions up to the issue of determining a matrix square root. Thanks to the linearization trick, this problem is thus the only remaining obstacle preventing from obtaining an explicit description for finite range random walks. 

\paragraph{General case.}
We have seen in the previous section that the harmonic measure is fully described by the family of matrices $\mu = (\mu_g)_{g \in S}$ on $[r]$ defined by \eqref{eq:defmug}. By Theorem \ref{thm:harmonic}, these matrices are uniquely characterized by the traffic equation \eqref{eq:traffic} so it is possible to evaluate numerically $\mu$ by iterating the map defining the traffic equations. 

The hitting probabilities $q = (q_g)_{g \in S}$ satisfy a simpler quadratic equation \eqref{eq:hitting_probabilities} than the traffic equation of $\mu$. As explained in the proof of Lemma \ref{le:hitprob}, it is possible to evaluate numerically from above and from below $q$ by iterating the map $f$ defined in \eqref{eq:hitting_probabilities} such that $q = f(q)$.

\paragraph{Case of $G= \Fd \ast \bZ / 2 \bZ \ast \cdots \ast  \bZ / 2 \bZ $.} In some special cases, one can find additional relations allowing to write the matrix $\mu_{g}$ as an explicit function of the matrices $q_{g}$. This case occurs for instance when $G= \Fd \ast \bZ / 2 \bZ \ast \cdots \ast  \bZ / 2 \bZ $. Given $g \in S$, let $d_{g}$ be the diagonal matrix with diagonal entries
\[
 d_{g}(u,u):=  \sum_{w} q_{g}(u,w).
\]
In words, $d_{g}(u,u)$ is the probability to ever reach $g$, starting from the pair $(e,u)$. 

\begin{prop}\label{prop:muqfree}
 When $G= \Fd \ast \bZ / 2 \bZ \ast \cdots \ast  \bZ / 2 \bZ $, for all $g \in S$ the matrix $(I-q_{g^{-1}} q_{g})$ is invertible and 
 \begin{equation}
  \mu_{g}(u,v) = q_{g}(u,v) \sum_{w \in [r]} (I-q_{g^{-1}} q_{g})^{-1} (I-d_{g^{-1}})(v,w) \quad \forall g \in S, u,v \in [r].
 \end{equation}
\end{prop}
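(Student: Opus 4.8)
The plan is to reduce the proposition to a two-term linear recursion for the diagonal vectors of the matrices $\Delta_g$ and then to invert that recursion. Recall from \eqref{eq:mu=qD} that $\mu_g = q_g \Delta_g$, so $\mu_g$ is obtained from $q_g$ by scaling its $v$-th column by $\Delta_g(v,v)$. Consequently it suffices to compute, for each $g \in S$, the vector $y_g \in \bR^{r}$ defined by $y_g(v) := \Delta_g(v,v)$, since then $\mu_g(u,v) = q_g(u,v)\, y_g(v)$, which is exactly the asserted formula once $y_g$ is identified with $(I - q_{g^{-1}}q_g)^{-1}(I - d_{g^{-1}})\II$. I would first record the two elementary identities $\Delta_g \II = y_g$ (by definition) and $q_{g^{-1}} \II = d_{g^{-1}} \II$, the latter because the $v$-th row sum of $q_{g^{-1}}$ is precisely $d_{g^{-1}}(v,v)$, so that $\II - q_{g^{-1}}\II = (I - d_{g^{-1}})\II$.

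The core step is to set up the recursion. Using \eqref{eq:defDg} together with the free-group identity $\Next(g) = S \setminus \{g^{-1}\}$, I would write $y_g(v) = \sum_{h \in \Next(g)} \sum_{w}\mu_h(v,w) = \sum_{h \in S}\sum_w \mu_h(v,w) - \sum_w \mu_{g^{-1}}(v,w)$. Since the walk is transient (Proposition \ref{prop:ex+uni_stationary}), the limit $X_\infty$ exists almost surely and its first letter lies in $S$, so $\sum_{h \in S}\sum_w \mu_h(v,w) = \bP_{(e,v)}[X_\infty^{(1)} \in S] = 1$; hence $y_g = \II - \mu_{g^{-1}}\II$. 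Substituting $\mu_{g^{-1}}\II = q_{g^{-1}}\Delta_{g^{-1}}\II = q_{g^{-1}} y_{g^{-1}}$ gives $y_g = \II - q_{g^{-1}} y_{g^{-1}}$, and the same computation with $g$ and $g^{-1}$ interchanged gives $y_{g^{-1}} = \II - q_g y_g$. Eliminating $y_{g^{-1}}$ yields $(I - q_{g^{-1}} q_g)\, y_g = \II - q_{g^{-1}}\II = (I - d_{g^{-1}})\II$, which is the system I must solve.

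The main obstacle is the invertibility of $I - q_{g^{-1}} q_g$, which I expect to be the only genuinely non-formal point. I would prove it by showing $\| q_{g^{-1}}q_g \|_\infty < 1$, that is, that every row sum is strictly below $1$; the Neumann series then yields invertibility. The $u$-th row sum is $\sum_w q_{g^{-1}}(u,w)\, d_g(w,w)$, which by the strong Markov property at $\tau_{g^{-1}}$ and left-translation by $g^{-1}$ (sending the target $g$ to $g^{-1}g = e$) equals the probability that the walk started at $(e,u)$ first reaches $g^{-1}$ and subsequently returns to the group identity $e$. This is bounded by the probability that $(X_n)$ ever returns to $\{e\}\times[r]$ after time $0$. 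Because $\lvert X_n\rvert \to \infty$ almost surely and $[r]$ is finite, the walk visits $\{e\}\times[r]$ only finitely often; a renewal argument applied to the finite-state trace chain of successive visits to $\{e\}\times[r]$ then forces this return probability to be strictly less than $1$ from each starting color, and with finitely many colors the maximal row sum is still $< 1$. Finally I would substitute $y_g = (I - q_{g^{-1}}q_g)^{-1}(I - d_{g^{-1}})\II$ into $\mu_g(u,v) = q_g(u,v)\, y_g(v)$ and read off the stated formula, using $y_g(v) = \sum_w (I - q_{g^{-1}}q_g)^{-1}(I - d_{g^{-1}})(v,w)$.
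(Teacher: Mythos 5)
Your derivation of the linear system is correct and is in substance identical to the paper's: the paper also uses $\mu_g = q_g\Delta_g$, the identity $\Delta_g(v,v) = 1 - \sum_w \mu_{g^{-1}}(v,w)$ coming from $\Next(g) = S\setminus\{g^{-1}\}$, and the resulting equation $x = (I-d_{g^{-1}})\II + q_{g^{-1}}q_g\, x$ for $x_v = \Delta_g(v,v)$. The genuine gap is in your invertibility argument. The intermediate claim that every row sum of $q_{g^{-1}}q_g$ is strictly less than $1$ is not only unproved by your renewal argument, it can actually fail: if some color $u$ forces a deterministic step to $g^{-1}$ (as happens for the intermediate colors produced by the linearization constructions) and the colors reached at $g^{-1}$ force a return to $e$, then $\sum_w (q_{g^{-1}}q_g)(u,w) = 1$ for that $u$, while the walk remains transient and quasi-irreducible. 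The logical flaw is the inference ``the walk visits $\{e\}\times[r]$ finitely often a.s.\ $\Rightarrow$ the return probability to $\{e\}\times[r]$ is $<1$ from each starting color'': for a multi-state trace chain, a.s.\ extinction only forces the $n$-step survival probabilities to tend to $0$, not the one-step survival probability to be $<1$ in every row (consider a substochastic $2\times 2$ matrix whose first row sums to $1$ and second row to $1/2$).

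The repair is short and is what the paper does: $\sum_{v}\sum_{n\geq 0}(q_{g^{-1}}q_g)^n(u,v)$ is the expected number of completed round trips $e \to g^{-1} \to e$, which is bounded by the expected number of visits to $\{e\}\times[r]$; this is finite because each single state $(e,v)$ is transient (for a single state, a.s.\ finitely many visits is equivalent to return probability $<1$, hence to a finite Green function). Convergence of the Neumann series then gives the inverse directly. Equivalently, you could salvage your norm argument one level up: $\sum_v (q_{g^{-1}}q_g)^n(u,v)$ is the probability of $n$ round trips, which tends to $0$ by transience, so $\|(q_{g^{-1}}q_g)^n\|_\infty < 1$ for some $n$ and the spectral radius is $<1$. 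Either way the conclusion holds, but not via $\|q_{g^{-1}}q_g\|_\infty < 1$.
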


\begin{proof}
Suppose $G= \Fd \ast \bZ / 2 \bZ \ast \cdots \ast  \bZ / 2 \bZ $. Then for all $g \in S, u \in [r]$,
 \[
  \mu_{g}(u,v) = q_{g}(u,v) \left( 1-\sum_{w \in [r]} q_{g^{-1}}(v,w) + \sum_{w,z \in [r]} q_{g^{-1}}(v,w) \mu_{g}(w,z) \right).
 \]
In particular $q_{g}(u,v) = 0$ implies $\mu_{g}(u,v) = 0$. Otherwise, rewrites this equation as
 \[
  \frac{\mu_{g}(u,v)}{q_{g}(u,v)} = 1 - d_{g^{-1}}(v,v) + \sum_{w,z \in [r]} q_{g^{-1}}(v,w) q_{g}(w,z) \frac{\mu_{g}(w,z)}{q_{g}(w,z)}.
 \]
 The right-hand being independant of $u$, so is the left-hand side. Consequently let $x_{v} := \mu_{g}(u,v) / q_{g}(u,v)$. Then one can again rewrite the above equation as the matrix linear equation 
 \[
  x = (I-d_{g^{-1}}) \II + q_{g^{-1}} q_{g} x.
 \]
 Provided $I - q_{g^{-1}} q_{g}$ is invertible, solving this equation yields the desired expression for $\mu_{g}(u,v)$.
 
 Therefore we are left to prove that $I-q_{g^{-1}} q_{g}$ is invertible. This can be justified by the fact that $\sum_{v \in [r]} \sum_{n \geq 0} (q_{g^{-1}} q_{g})^{n}(u,v)$ is the average number of times the walk goes to $g^{-1}$ and comes back to $e$ when starting at colour $u$. By transience of the walk, this number must be finite. Hence the sum $\sum_{n \geq 0} (q_{g^{-1}} q_{g})^{n}$ is convergent and the inverse of this matrix is precisely $I - q_{g^{-1}} q_{g}$. 
\end{proof}

We now further study the matrix equation \eqref{eq:defqg} satisfied by the hitting probabilities $(q_g)_{g \in S}$. Supposing $G= \Fd \ast \bZ / 2 \bZ \ast \cdots \ast  \bZ / 2 \bZ $, for all $g \in S$, Equation \eqref{eq:hitting_probabilities} becomes: 
\begin{equation}\label{eq:qfree}
q_{g} = p_{g} + \sum_{h \ne g^{-1}} p_{h^{-1}} q_{h} q_{g}.
\end{equation}
In the colourless case, it is possible to reduce Equation \eqref{eq:qfree} to a scalar equation, see for example \cite{ledrappier2001some}. We extend this computation to the coloured case. We define the matrix in $M_r(\bR)$, 
\begin{equation}\label{eq:defsigma}
z = I - \sum_{g \in S}  p_{g^{-1}} q_{g}.
\end{equation}
We now express $q_g$ as a function of $(z,p_g,p_{g^{-1}})$ and find a closed equation satisfied by $z$. For simplicity, we assume that for all $g \in G$,  the matrix $p_g$ is invertible. If this is not the case, a similar argument holds but one should be careful with pseudo-inverses.

We define the matrices in $M_{2r} (\bR)$: 
$$
P_g = \begin{pmatrix}
0 & p_g \\
p_{g^{-1}} & 0
\end{pmatrix} \; , \quad Z = \begin{pmatrix}
z & 0 \\
0 & z 
\end{pmatrix}  \quad \hbox{ and } \quad  Q_g = \begin{pmatrix}
0 & q_{g} \\
  q_{g^{-1}}  & 0 
\end{pmatrix}.
$$
From our assumption, $P_g$ is invertible and 
\[
P_{g}^{-1} = \begin{pmatrix}
	0 & (p_{g^{-1}})^{-1} \\
	p_{g}^{-1} & 0
\end{pmatrix}.
\]  
Applying \eqref{eq:qfree} to $g$ and $g^{-1}$, we find that, for all $g \in S$, $zq_g  = p_g - p_g q_{g^{-1}} q_g$ and thus
$$
Z Q_g  = P_g - P_g Q_g^2.
$$

We set 
$
Z_g = P_g^{-1} Z 
$. The above equation rewrites
\begin{equation}
\label{eq:tZQ}
(Z_{g} + Q_{g}) Q_{g} = I.
\end{equation}
In particular $Z_{g} = Q_{g}^{-1} - Q_{g}$ so $Z_{g}$ and $Q_{g}$ commute. We may thus solve the quadratic equation $Q_g^2 + Z_{g} Q_{g} - I = 0$ with $Q_g$ as unknown as in the scalar case. Completing the square yields $$(Q_g + Z_g /2)^2 = I + Z_{g}^{2} / 4.$$ 
Therefore, for some proper choice of the matrix square root function, we get
\begin{equation}\label{eq:tQS}
Q_g = \frac 1 2 \left( \sqrt[*]{ 4 I + (P_g^{-1} Z)^2 }  - P_g^{-1} Z \right),
\end{equation}
where $\sqrt[*]{\cdot}$ is a notation to stress that the choice of the square root is unknown.  First, as $Z_{g}$ and $Q_{g}$ commute, the eigenvalues completely determine the square root. Also, since $Q_g$ is a block antidiagonal matrix, for every of its eigenvalue $\lambda$, $- \lambda$ must also be an eigenvalue with the same multiplicity (algebraic and geometric). In particular, we are left with at most $2^r$ choices for the square root to pick in \eqref{eq:tQS} (a choice of sign for each eigenvalue pair $(\lambda,-\lambda)$). 

There is one useful property  to further determine the square root. Consider 
$$
R_g = Q_g P_g^{-1}  = \begin{pmatrix}
q_g p_{g}^{-1} & 0 \\
0 & q_{g^{-1}} p_{g^{-1}}^{-1}
\end{pmatrix}.
$$
We observe from \eqref{eq:qfree} that $q_g p_g^{-1} = (I - \sum_{h \ne g^{-1}} p_{h^{-1}} q_h)^{-1}$. The matrix $\sum_{h \ne g^{-1}} p_{h^{-1}} q_h$ is sub-stochastic: it has non-negative entries and the sum over each row is less or equal than one (from a starting colour, it corresponds to the probability that the coloured walk killed when visiting $g$ comes back to $e$ after some time). In particular, the matrix  $\sum_{h \ne g^{-1}} p_{h^{-1}} q_h$ has spectral radius less than one. Thus all eigenvalues of $q_g p_g^{-1}$ and $R_g$ have positive real part.  Using \eqref{eq:defsigma}, the same argument shows that $z$ and $Z$ have all their eigenvalues with positive real parts.

Finally, from Equation \eqref{eq:defsigma}, we have $Z = I - \sum_g P_g Q_g = I - \sum_g P_g R_g P_g$. We deduce that  
\begin{equation}\label{eq:Spf}
Z  =   I - \frac 1 2 \sum_{g \in S} \left( P_g  \sqrt[*]{ 4 I + ( P_g^{-1} Z ) ^2 }  - Z\right). 
\end{equation}

Up to this issue of square root, we thus have found a fixed point equation satisfied by $z$ (in Equation \eqref{eq:Spf}) and expressed $q_g$ as a function of $(z,p_{g},p_{g^{-1}})$ (in Equation \eqref{eq:tQS}). If $r=1$, we can retrieve a known formula for colourless random walks. We note that Equation \eqref{eq:Spf} should be compared to Proposition 3.1 in Lehner \cite{lehner1999computing} where a related formula is derived in a self-adjoint case (it can be checked that $2z$ is the inverse of the diagonal term of the Green function $(I - \tcP)^{-1} (e,e)$ where $(I - \tcP)^{-1} $ is seen as an infinite matrix indexed by $G \times G$ with coefficients in $M_r(\bC)$).

\section{Computing drift and entropy}\label{section:computing}

For classical convolution random walks, the general formulas for the rate of escape and entropy given in Theorem \ref{thm:formula_drift_entropy} go back to the work of Furstenberg \cite{MR163345} and are well known. The proofs extend easily to coloured random walks, so we will not give further details. We refer to \cite{ledrappier2001some, kaimanovich1983random} for the case of free groups and \cite{jean2005random, mairesse2007random, MR2378433} for more general free products.

We now turn to the derivation of \eqref{thm:formula_entropy_explicit} from the integral formula \eqref{eq:entropy_integral}. The latter shows that the computation of the entropy ultimately comes down to the computation of some Radon-Nikodym derivatives. In the colourless case, this computation goes basically as follows. Consider a cylinder $\xi_{1} \cdots \xi_{n}$. Compute $g \cdot p^{\infty}(\xi_{1} \cdots \xi_{n})$ distinguishing the different cases occurring:
\begin{itemize}
 \item if $g = \xi_{1}$ then $g \cdot p^{\infty}(\xi_{1} \cdots \xi_{n}) = p^{\infty}(\xi_{2} \cdots \xi_{n})$
 \item if $\xi_{1} \in G_{i}, g \neq \xi_{1}$ and there exists $h \in G_{i}$ such that $gh = \xi_{1}$, then $g \cdot p^{\infty}(\xi_{1} \cdots \xi_{n}) = p^{\infty}(h \xi_{2} \cdots \xi_{n})$
 \item otherwise, $g \cdot p^{\infty}(\xi_{1} \cdots \xi_{n}) = p^{\infty}(g^{-1} \xi_{1} \cdots \xi_{n}).$ 
\end{itemize}
As one can check, the last case occurs if and only if $\xi_{1} \in \Next(g^{-1})$. Expand now the expressions above into products using from \eqref{eq:harmonic2} that $p^{\infty}(\xi_{1} \cdots \xi_{n}) = q_{\xi_{1}} \cdots q_{\xi_{n-1}} \mu_{\xi_{n}}$ to get 
\[
 \frac{g \cdot p^{\infty}(\xi_{1} \cdots \xi_{n})}{p^{\infty}(\xi_{1} \cdots \xi_{n})} = 
 \left\{ \begin{array}{l l}
  1/q_{g} & \text{if $\xi_{1} = g$,}   \\                                    
  q_{g^{-1}} & \text{if $\xi_{1} \in \Next(g^{-1})$,} \\
  q_{h}/q_{\xi_{1}} = q_{g^{-1} \xi_{1}}/q_{\xi_{1}} & \text{if $h \in S$ and $gh = \xi_{1}$.}
\end{array}\right.
\]
One can then easily express the integral \eqref{eq:entropy_integral} in terms of the scalars $p_{g}, q_{g}$ and $\mu_{g}$ and obtain the formulas given in \cite{ledrappier2001some,mairesse2007random}. 

For coloured random walks, the same computations can be made with matrices except that the cancellation between products no longer takes place. Instead, one has to deal with an infinite product of random matrices. For example, as one might guess from \eqref{eq:defgxi}-\eqref{eq:harmonic2}, it is true that
\begin{equation}\label{eq:limit_radon_nikodym}
 \frac{d g p^{\infty}_{v}}{d p^{\infty}_{u}}(\xi) = \lim_{n \rightarrow \infty} \frac{ \II_{v}^{\top} \ q_{g^{-1}} q_{\xi{1}} \cdots q_{\xi{n-1}} \mu_{\xi{n}} \II}{\II_{u}^{\top} \  q_{\xi{1}} \cdots q_{\xi{n-1}} \mu_{\xi{n}} \II} \quad \text{a.s.}, 
\end{equation}
if $\xi_{1} \in \Next(g^{-1}) $. Convergence as above is the object of the next subsection.

\subsection{Convergence in direction for inhomogeneous products of matrices }

In this final subsection, we prove Theorem \ref{thm:formula_entropy_explicit}. The computation of the Radon-Nikodym derivatives in the integral formula of the entropy requires to investigate infinite products of random, non-negative matrices (that is, with non-negative entries). We use first use results for non-negative, deterministic matrices \cite{seneta2006non} to justify limits of infinite matrix products. 
This yields formulas like \eqref{eq:limit_radon_nikodym} and a first expression for the entropy \eqref{eq:radon_nikodym}. 
Then we use results for products of random matrices \cite{bougerol2012products, carmona2012spectral} to describe the law of the limits obtained, which yields eventually Theorem \ref{thm:formula_entropy_explicit}.

\paragraph{Convergence of the Radon-Nikodym derivatives.}
Instead of the general results in \cite{seneta2006non}, it turns out that we can directly use results from \cite{MR388545, lalley1993finite} which already involved matrix products to study finite range random walks. It does not seem however that the methods used in these references can be interpreted as an application of the linearization trick.

Let $X$ be a finite set and $Y$ a subset of $X \times X$ such that for all $x \in X$, the set $\{y \in X, (x,y) \in Y \}$ is non empty. Let $\Sigma := X^{\bZ}$ be the space of doubly infinite sequences $(\xi_{n})_{n \in \bZ}$ with values in $X$, such that $(\xi_{n},\xi_{n+1}) \in Y$ for all $n \in \bZ$. Let $\sigma : (\xi_{n})_{n \in \bZ} \mapsto (\xi_{n+1})_{n \in \bZ}$ denote the standard shift on $\Sigma$ and given a function $f : \Sigma \rightarrow \bC$ and $n \in \bN$, write 
\[
 S_{n}f := f + f \circ \sigma + \ldots f \circ \sigma^{n-1}.
\]
Recall the definition of $\cP^+$ in \eqref{eq:defcP+}.  We will use the following formulation of a result of \cite{lalley1993finite} on convergence of matrix products, but note that a similar result was already used in the earlier reference \cite{MR388545}.
 
\begin{prop}[{\cite[Prop 5.2]{lalley1993finite}}]\label{prop:convergence_product}
 Let $(M_{x})_{x \in X}$ be a family of $r \times r$ matrices with non-negative entries. Assume there exists integers $m \geq 0, k \geq 1$ and a function $B : X^{k} \rightarrow 2^{[r]} \smallsetminus \{ \emptyset \}$ (the set of non-empty subsets of $[r]$), such that for every $n \geq m$ and every family $x_{1}, \ldots, x_{n+k} \in X$ with $(x_{i},x_{i+1}) \in Y$ for all $i$, 
 \begin{equation}\label{eq:condition_imprimitive}
  (M_{x_{1}} M_{x_{2}} \cdots \ M_{x_{n+k}})_{u,v} > 0 \Leftrightarrow v \in B(x_{n+1}, \ldots, x_{n+k}) \qquad \forall u,v \in [r]
 \end{equation}
 Then there exist constants $C > 0$ and $0 < \alpha < 1$, maps $\varphi, \gamma : \Sigma \rightarrow \bR$ and $V,W : \Sigma \rightarrow \cPr$ such that for all $\xi \in \Sigma$,
 \begin{equation}\label{eq:convergence_product}
  \norm{e^{- S_{n} \varphi (\xi)} M_{\xi_{1}} M_{\xi_{2}} \cdots \ M_{\xi_{n}} - \gamma(\sigma_{n} \xi) V(\xi) W(\sigma^{n} \xi)^{\top}} \leq C \alpha^{n} ,
 \end{equation}
where $V=V(\xi_{1}, \xi_{2}, \ldots)$ depends only on the ``forward coordinates``, while $W = W(\xi_{0}, \xi_{-1}, \ldots)$ depends only on the ''backward coordinates''. Furthermore,
 \begin{equation}\label{eq:invariance_V}
  M_{\xi_{1}} V(\sigma \xi) = e^{\varphi(\xi)} V(\xi).
 \end{equation}
\end{prop}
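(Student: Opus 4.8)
The plan is to prove this via Birkhoff's contraction theorem for the Hilbert projective metric $d_{H}$ on the positive cone $\bR^{r}_{+}$, which is the standard tool for controlling limits of products of nonnegative matrices. Recall that a matrix possessing a strictly positive block acts as a strict contraction in $d_{H}$, with contraction coefficient bounded by $\tanh(\Delta/4) < 1$, where $\Delta$ is the projective diameter of the image cone. The hypothesis \eqref{eq:condition_imprimitive} is exactly what is needed to replace the (false, since individual $M_{x}$ may have zero entries) statement ``each $M_{x}$ contracts'' by the correct one ``every admissible block of length $\ge m+k$ contracts uniformly'': along any admissible word the support pattern of a long product is frozen to the set $B(\cdot)$ determined by the last $k$ letters, so the nonzero columns are uniformly strictly positive and the projective diameter of the image is bounded by a constant independent of $\xi$. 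Since $X$ is finite there are finitely many admissible blocks, so one obtains a uniform contraction coefficient $\alpha_{0} < 1$ after each block, hence a geometric rate $\alpha = \alpha_{0}^{1/(m+k)}$.

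First I would construct the forward family $V$. Fix a reference direction $e \in \cPr$ and let $V_{n}(\xi) \in \cPr$ be the normalization of $M_{\xi_{1}} \cdots M_{\xi_{n}} e$; the frozen support pattern from \eqref{eq:condition_imprimitive} guarantees this vector has all coordinates positive, so it genuinely lives in the open simplex. Uniform block contraction gives $d_{H}(V_{n+1}(\xi), V_{n}(\xi)) \le C \alpha^{n}$, so $(V_{n}(\xi))$ is Cauchy and converges to some $V(\xi) \in \cPr$; because $V_{n}(\xi)$ depends only on $\xi_{1}, \ldots, \xi_{n}$, the limit depends only on the forward coordinates, and the same estimate makes the convergence uniform in $\xi$. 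Passing to the limit in $V_{n}(\xi) \propto M_{\xi_{1}} V_{n-1}(\sigma \xi)$ yields $M_{\xi_{1}} V(\sigma \xi) \propto V(\xi)$; normalizing in the simplex I set $\varphi(\xi) := \log \innerprod{\II}{M_{\xi_{1}} V(\sigma \xi)}$, so that $M_{\xi_{1}} V(\sigma \xi) = e^{\varphi(\xi)} V(\xi)$, which is \eqref{eq:invariance_V}, and telescoping gives $M_{\xi_{1}} \cdots M_{\xi_{n}} V(\sigma^{n} \xi) = e^{S_{n}\varphi(\xi)} V(\xi)$.

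Symmetrically, applying the construction to the transposed matrices read along the reversed sequence produces the backward family $W(\xi) \in \cPr$, the limiting direction of the row vectors $e^{\top} M_{\xi_{0}} M_{\xi_{-1}} \cdots$, depending only on the backward coordinates $\xi_{0}, \xi_{-1}, \ldots$. Setting $\gamma(\eta) := \innerprod{W(\eta)}{V(\eta)}^{-1}$, it remains to establish the rank-one estimate \eqref{eq:convergence_product}. For this I would analyze $e^{-S_{n}\varphi(\xi)} M_{\xi_{1}} \cdots M_{\xi_{n}}$ column by column: contraction of the forward part forces every column to align with $V(\xi)$ up to an error $O(\alpha^{n})$, while the coefficient multiplying each basis vector is governed by the backward products and converges to the corresponding entry of $\gamma(\sigma^{n}\xi) W(\sigma^{n}\xi)$. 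Combining the two one-sided estimates and using that the normalization already fixes the action on $V(\sigma^{n}\xi)$ — indeed the telescoping identity gives $e^{-S_{n}\varphi(\xi)} M_{\xi_{1}} \cdots M_{\xi_{n}} V(\sigma^{n}\xi) = V(\xi) = \gamma(\sigma^{n}\xi) V(\xi) \innerprod{W(\sigma^{n}\xi)}{V(\sigma^{n}\xi)}$ — pins down $\gamma$ and yields the bound $C\alpha^{n}$.

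The main obstacle is not the contraction argument itself but the bookkeeping forced by \eqref{eq:condition_imprimitive}: because the $M_{x}$ are merely nonnegative and transitions are constrained by the subshift $Y$, one must verify that the frozen support set $B(x_{n+1}, \ldots, x_{n+k})$ is precisely what guarantees strict positivity on the relevant coordinates, so that $V(\xi)$ and $W(\xi)$ land in the open simplex $\cPr$ rather than on its boundary, and that the contraction coefficient can be taken uniform over the non-compact bi-infinite sequence space $\Sigma$. Both points are handled by the finiteness of $X$ together with the uniformity of \eqref{eq:condition_imprimitive} over admissible words of length $\ge m+k$; once uniform contraction on a fixed-size block is in hand, the geometric rate and all the stated regularity of $\varphi, \gamma, V, W$ follow routinely.
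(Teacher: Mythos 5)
The paper does not actually prove this proposition: it is imported verbatim from Lalley \cite{lalley1993finite} (his Proposition 5.2), so there is no internal proof to compare against. Your Birkhoff/Hilbert-metric argument is the standard route to such inhomogeneous Perron--Frobenius statements and is essentially the argument of the cited source. The key observation is correct and well identified: condition \eqref{eq:condition_imprimitive} forces every admissible product of length at least $m+k$ to have each nonzero column strictly positive in \emph{every} row (the support set $B$ does not depend on $u$), with the set of nonzero columns frozen by the last $k$ letters; since $X$ is finite this gives a uniformly bounded projective diameter for the images and hence a uniform Birkhoff contraction coefficient per block, from which the geometric rate, the Cauchy property of the normalized forward images, the limit relation \eqref{eq:invariance_V}, the dual construction of $W$ from the transposed matrices, and the pinning of $\gamma$ via the telescoped identity all follow as you describe. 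Two details deserve care when writing this out. First, in the Cauchy estimate one cannot bound $d_H(V_{n+1},V_n)$ by contracting $d_H(M_{\xi_{n+1}}e,e)$ directly, since these two vectors need not be comparable in the cone (the Hilbert distance may be infinite); one must peel off a full block of length $m+k$ at the right end so that both compared vectors are strictly positive, and only then apply the contraction of the remaining left factor. Second, your stated goal of verifying that both $V(\xi)$ and $W(\xi)$ land in the \emph{open} simplex $\cPr$ succeeds for $V$ (no row of a long admissible product vanishes) but cannot succeed for $W$ in general: the row limit $W(\eta)$ is supported exactly on $B(\eta_{-k+1},\ldots,\eta_{0})$ and lies on the boundary of the simplex whenever $B$ is a proper subset of $[r]$. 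This is really an imprecision in the statement as transcribed rather than a flaw in your argument, but the claim about $W$ should be weakened accordingly.
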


Proposition \ref{prop:convergence_product} states shows under condition \eqref{eq:condition_imprimitive}, the product $M_{\xi_{1}} M_{\xi_{2}} \cdots \ M_{\xi_{n}}$ tends up to renormalization factor to a rank one matrix whose range is spanned by the vector $V(\xi)$.  
This result is well known for powers of a matrix with positive entries, in which case the vector $V$ is nothing but the Perron-Frobenius eigenvector.
Proposition \ref{prop:convergence_product} is thus a generalization of the Perron Frobenius theory to inhomogeneous products of non-negative matrices with possibly zero columns (condition \eqref{eq:condition_imprimitive}). We refer to \cite{lalley1993finite} for additional properties  of the functions $\varphi, \gamma$ with the vectors $V,W$ which will not be needed here.

To apply Proposition \ref{prop:convergence_product} in context of linearized random walks, take $X = S$ and $(x,y) \in Y$ if and only if $y \in \Next(g)$.

\begin{coroll}\label{cor:convergenceRN}
 Suppose the matrices $(q_{g})_{g \in S}$ satisfy the hypothesis of Proposition \ref{prop:convergence_product} and let $V: \partial G \rightarrow \cPr$ be the corresponding map. Then for all $u,v \in [r]$, $\xi \in \partial G$,
 \begin{equation}\label{eq:radon_nikodym}
  \frac{d g p^{\infty}_{v}}{d p^{\infty}_{u}}( \xi) = \left\{ \begin{array}{l l}
   \frac{\innerprod{\mathbbm{1}_{v}}{V(\sigma \xi) }}{\innerprod{q_{g}(u,\cdot)}{V(\sigma \xi)}} & \text{if $\xi_{1} = g$,}  \vspace{4pt} \\
   \frac{\langle q_{g^{-1}}(v,\cdot) , V(\xi) \rangle }{\innerprod{\mathbbm{1}_{u}}{V(\xi)}} & \text{if $\xi_{1} \in \Next(g^{-1})$,}   \vspace{4pt}\\
 \frac{\innerprod{q_{h}(v, \cdot)}{V(\sigma \xi)}}{\innerprod{q_{gh}(u, \cdot)}{V(\sigma \xi)}} & \text{if $h \in S$ and $gh = \xi_{1}$.}
\end{array}\right.
 \end{equation}
\end{coroll}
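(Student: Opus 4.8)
The plan is to obtain \eqref{eq:radon_nikodym} as the almost sure limit \eqref{eq:limit_radon_nikodym} of ratios of cylinder masses, and then to identify this limit using the projective convergence of Proposition \ref{prop:convergence_product}. Since $p_{g}(u,v) > 0$, the stationarity relation \eqref{eq:statharm} gives $g\, p^{\infty}_{v} \ll p^{\infty}_{u}$, so the Radon--Nikodym derivative exists, and by the martingale convergence theorem (Lebesgue differentiation along the cylinder filtration of $\partial G$) it is given, for $p^{\infty}_{u}$-almost every $\xi$, by
\[
 \frac{d g p^{\infty}_{v}}{d p^{\infty}_{u}}(\xi) = \lim_{n \to \infty} \frac{g p^{\infty}_{v}(\xi_{1} \cdots \xi_{n})}{p^{\infty}_{u}(\xi_{1} \cdots \xi_{n})}.
\]
Thus everything reduces to computing the two cylinder masses and passing to the limit.

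For the numerator I would use the definition of the action \eqref{eq:defgxi}: since $g p^{\infty}_{v}(\xi_{1}\cdots\xi_{n}) = p^{\infty}_{v}(\{\eta : g\eta \in [\xi_{1}\cdots\xi_{n}]\})$, the three cases of \eqref{eq:defgxi} turn this preimage into a single cylinder for $p^{\infty}_{v}$, giving respectively $p^{\infty}_{v}(\xi_{2}\cdots\xi_{n})$ when $\xi_{1}=g$, $p^{\infty}_{v}(g^{-1}\xi_{1}\cdots\xi_{n})$ when $\xi_{1}\in\Next(g^{-1})$, and $p^{\infty}_{v}(h\xi_{2}\cdots\xi_{n})$ when $gh=\xi_{1}$ with $h \in S$. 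These three conditions on $(g,\xi_{1})$ are exhaustive and mutually exclusive, corresponding exactly to $g^{-1}\xi_{1}$ being equal to $e$, lying in $S$, or lying in neither (recall $\Next(g^{-1}) = \{\xi_{1}\in S : g^{-1}\xi_{1}\notin S\cup\{e\}\}$). This is the same case analysis as in the colorless computation recalled just before the statement. Plugging \eqref{eq:harmonic2} into both numerator and denominator then writes each as a pairing $\innerprod{a}{w_{n}}$, where $a$ is a fixed row vector ($\II_{v}$, $\II_{u}$, $q_{g^{-1}}(v,\cdot)$, $q_{g}(u,\cdot)$, $q_{h}(v,\cdot)$ or $q_{gh}(u,\cdot)$) and $w_{n}$ is the common tail product $q_{\xi_{k}} \cdots q_{\xi_{n-1}} \mu_{\xi_{n}} \II$, with $k=1$ in the second case and $k=2$ in the first and third.

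The last step is the projective convergence. Applying Proposition \ref{prop:convergence_product} with $M_{x}=q_{x}$, the product $q_{\xi_{k}} \cdots q_{\xi_{n-1}}$ applied to the positive vector $\mu_{\xi_{n}}\II$ converges in direction to $V(\xi)$ when $k=1$ and to $V(\sigma\xi)$ when $k=2$; that is, $w_{n} = c_{n}\,(V + o(1))$ for a scalar $c_{n}$ and the appropriate limit vector $V \in \cPr$. Because the same $w_{n}$ appears in numerator and denominator, the scalar $c_{n}$ cancels and
\[
 \frac{\innerprod{a}{w_{n}}}{\innerprod{b}{w_{n}}} = \frac{\innerprod{a}{w_{n}/c_{n}}}{\innerprod{b}{w_{n}/c_{n}}} \longrightarrow \frac{\innerprod{a}{V}}{\innerprod{b}{V}},
\]
which is precisely the right-hand side of \eqref{eq:radon_nikodym} in each of the three cases. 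The limiting denominators $\innerprod{b}{V}$ are positive since $V \in \cPr$ has strictly positive entries and the relevant rows $b$ ($\II_{u}$, $q_{g}(u,\cdot)$, $q_{gh}(u,\cdot)$) are nonzero when $g$ is reachable from color $u$.

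The main obstacle is this projective convergence argument. One must check that $(q_{g})_{g \in S}$ satisfies the imprimitivity hypothesis \eqref{eq:condition_imprimitive} so that Proposition \ref{prop:convergence_product} applies (this is the standing assumption, and is automatic for linearized walks by Proposition \ref{prop:linlalley}), and one must control the fact that the last factor $\mu_{\xi_{n}}\II$ varies with $n$: since there are only finitely many possible last letters $\xi_{n}\in S$, it suffices that the pairing of each $\mu_{g}\II$ with the backward vector $W$ appearing in \eqref{eq:convergence_product} remains bounded away from $0$, so that the direction of $w_{n}$ is genuinely $V$ and not a degenerate limit. Once this is secured, handling the $p^{\infty}_{u}$-negligible exceptional sets and the positivity of the limiting denominators is routine.
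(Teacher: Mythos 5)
Your proposal is correct and follows essentially the same route as the paper's proof: compute the cylinder masses via the action \eqref{eq:defgxi} and formula \eqref{eq:harmonic2}, apply Proposition \ref{prop:convergence_product} to get projective convergence of the common tail product so the normalizing scalars cancel, and identify the limit of cylinder ratios with the Radon--Nikodym derivative. The only cosmetic difference is that you invoke martingale convergence along the cylinder filtration for this last identification, whereas the paper argues directly that the normalized restrictions $\II_{\xi_1\cdots\xi_n}\, d p^{\infty}_u / p^{\infty}_u(\xi_1\cdots\xi_n)$ converge to the Dirac mass at $\xi$; both are standard and equivalent here.
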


\begin{proof}
From \eqref{eq:defgxi}-\eqref{eq:harmonic2}, for any cylinder $\xi_{1} \cdots \xi_{n}$ 
 \[
  g \cdot p^{\infty}_{v}(\xi_{1} \cdots \xi_{n}) = \left\{ \begin{array}{l l}
     \II_{v}^{\intercal} q_{\xi_{2}} \cdots q_{\xi_{n-1}} \mu_{\xi_{n}} \II & \text{if $\xi_{1} = g$,} \\ 
     \II_{v}^{\intercal} q_{g^{-1}} q_{\xi_{1}} \cdots q_{\xi_{n-1}} \mu_{\xi_{n}} \II & \text{if $\xi_{1} \in \Next(g^{-1})$,} \\
     \II_{v}^{\intercal} q_{h} q_{\xi_{2}} \cdots \ \mu_{\xi_{n}} \II & \text{if $h \in S$ and $gh =\xi_{1}$.}
\end{array}\right.
 \]
 Let $\xi \in \partial G$. Combining the previous computation with Proposition \ref{prop:convergence_product}, there exist $\lambda_{n} >0$, uniformly lower bounded, and $\alpha < 1$ such that if $\xi_{1} = g$, 
 \begin{align*}
  \frac{g \cdot p^{\infty}_{v}(\xi_{1} \cdots \xi_{n})}{p^{\infty}_{u}(\xi_{1} \cdots \xi_{n})} &= \frac{\II_{v}^{\top} \left( q_{\xi_{2}} \cdots q_{\xi_{n-1}} \right) \left( \mu_{\xi_{n}} \II \right)}{q_{g}(u, \cdot)^{\top} \left( q_{\xi_{2}} \cdots q_{\xi_{n-1}} \right) \left( \mu_{\xi_{n}} \II \right)} \\
  &= \frac{\II_{v}^{\top} V( \sigma \xi) \lambda_{n} + O(\alpha^{n})}{q_{g}(u, \cdot)^{\top} V(\sigma \xi) \lambda_{n} + O(\alpha^{n})} \\
  &\xrightarrow[n \rightarrow \infty]{} \frac{ \innerprod{\II_{v}}{V(\sigma \xi)}}{\innerprod{q_{g}(u, \cdot)}{V(\sigma \xi)}}.
 \end{align*}
 The other cases are treated similarly.
%

 On the other hand 
 \[
  \frac{g \cdot p^{\infty}_{v}(\xi_{1} \cdots \xi_{n})}{p^{\infty}_{u}(\xi_{1} \cdots \xi_{n})} = \int_{\partial G} \frac{d g p^{\infty}_{v}}{p^{\infty}_{u}} \frac{\II_{\xi_{1} \cdots \xi_{n}}}{p^{\infty}_{u}(\xi_{1} \cdots \xi_{n})} \ d p^{\infty}_{u}
 \]
but as $n \rightarrow \infty$ the measure $\frac{\II_{\xi_{1} \cdots \xi_{n}}}{p^{\infty}_{u}(\xi_{1} \cdots \xi_{n})} \ d p^{\infty}_{u}$ converges to the Dirac mass at $\xi$: for all integer $k$ and $\epsilon > 0$, the mass of all $k$-cylinders except the cylinder $\xi_{1} \cdots \ \xi_{k}$ is bounded by $\epsilon$ for $n$ large enough. Thus the above limits give indeed the Radon-Nikodym derivative. 
\end{proof}

The next proposition asserts that for the coloured random walks that we are mainly interested in, Corollary \ref{cor:convergenceRN} applies. 
\begin{prop}\label{prop:linlalley}
 Suppose the coloured random walk $(Y_{n})_{n \geq 0}$ is a linearized random walk obtained from a finite range random walk $(X_{n})_{n \geq 0}$ via one of the procedures presented in Section \ref{section:linearization}. If $(X_{n})_{n \geq 0}$ is irreducible, then the matrices $(q_{g})_{g \in S}$, satisfy the hypothesis of Proposition \ref{prop:convergence_product}.
\end{prop}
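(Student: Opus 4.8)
The plan is to unwind the hypothesis of Proposition~\ref{prop:convergence_product} (that is, condition \eqref{eq:condition_imprimitive} for the family $M_g = q_g$) into a statement about the linearized walk, and then to verify it using the renewal structure of the construction together with the irreducibility of the underlying finite-range walk $(Y_n)_{n \geq 0}$. First I would record the probabilistic meaning of the products: chaining the first-passage interpretation \eqref{eq:defqg} of the $q_g$ and using that, to reach a reduced word $w := \xi_1 \cdots \xi_N$ with $\xi_{i+1}\in\Next(\xi_i)$, the walk must pass through every prefix of $w$, one gets that $(q_{\xi_1} \cdots q_{\xi_N})(u,v) > 0$ if and only if, started from $(e,u)$, the colored walk can reach the group element $w$ with first-passage color $v$. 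Thus \eqref{eq:condition_imprimitive} becomes the assertion that, for $N = n+k$ large, the set of achievable first-passage colors at $w$ is a non-empty set $B(\xi_{N-k+1},\ldots,\xi_N)$ depending only on the last $k$ letters of $w$ and not on the starting color $u$.

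Next I would exploit the defining feature of both linearizations from Section~\ref{section:linearization}: the states carrying the neutral color $1$ are exactly the positions of $(Y_n)$, consecutive visits to the neutral color form a renewal process, and between two such visits the position is multiplied by an element of $K$, hence moves by a distance at most $\ell_K$. Consequently the first passage to $w$ occurs during an excursion issued from a neutral state $(g,1)$ with $|g^{-1} w| \leq \ell_K$, and the arrival color $v$ is the intermediate color assigned by the construction to the traversed $K$-word at the prefix that first reaches $w$. Since $g$ lies within distance $\ell_K$ of $w$ and, in the tree-of-groups geometry of a plain group, every path from $e$ to $w$ enters through the parent $\xi_1 \cdots \xi_{N-1}$ (a cut point), this arrival color is a function only of the last few letters of $w$, namely $\xi_N$ together with its finite-group coset when $\xi_N \in S_j$. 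Taking $k := \ell_K$ this shows the achievable colors are contained in a set $B$ depending only on $(\xi_{N-k+1},\ldots,\xi_N)$, which is one implication of \eqref{eq:condition_imprimitive}; and since $w$ is reachable because $(Y_n)$ is irreducible, it shows $B \neq \emptyset$.

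It then remains to prove the converse implication uniformly in the starting color, namely that from every $u \in [r]$ each element of $B(\xi_{N-k+1},\ldots,\xi_N)$ is actually achievable, and this is the step I expect to be the main obstacle. The strategy is to build a positive-probability trajectory in three legs: from $(e,u)$, first reach some neutral state using the irreducibility of $P = \sum_g p_g$ (so that the starting color is forgotten); then, invoking the irreducibility of $(Y_n)$ together with the geometry, navigate along the prefixes of $w$ to a neutral state $(g,1)$ adjacent to the endpoint region while avoiding $w$ itself, which is possible precisely because $w$ is reached only through its parent, so $g$ may be approached from the side of $e$; and finally perform the single excursion through $w$ realizing the prescribed intermediate color $v$. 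The delicate points are to guarantee that the detour to $(g,1)$ can be made without prematurely hitting $w$, and that the last excursion can be chosen to pass through $w$ with the desired color; both rely on combining the bounded excursion length $\ell_K$ with the irreducibility of $(Y_n)$. Choosing $m$ large enough that a full renewal fits between the start and the length-$k$ suffix then yields \eqref{eq:condition_imprimitive} for all $N = n+k$ with $n \geq m$, and since the argument uses the constructions only through their common renewal and bounded-excursion structure, it applies to both the non-reversible and the reversible linearization.
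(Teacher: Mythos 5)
Your overall framing is sound and matches the paper's in spirit: you correctly translate condition \eqref{eq:condition_imprimitive} for $M_g = q_g$ into the statement that, for long reduced words $w=\xi_1\cdots\xi_N$, the set of achievable first-passage colors at $w$ from $(e,u)$ is non-empty, independent of $u$, and determined by a bounded suffix of $w$; and you correctly identify the structural ingredients (multiplicativity of $q$ along cut points, the renewal structure of the linearization, colors as positions inside $K$-words). But there is a genuine gap at exactly the step you flag as ``the main obstacle'': the uniform achievability, from every starting color $u$, of every color in $B(\xi_{N-k+1},\ldots,\xi_N)$, together with the claim that the achievable set depends only on that suffix. Your three-leg construction is only a strategy: you do not justify that the detour to a neutral state $(g,1)$ near $w$ can be made without prematurely hitting $w$ (the walk has range $\ell_K$ and can overshoot, so ``$w$ is reached only through its parent'' controls which group elements are crossed, not which ones are visited by a finite-range jump process restricted to neutral times), nor that the final excursion can realize a \emph{prescribed} intermediate color at the \emph{first} visit to $w$. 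These are precisely the points where the work lies, and leaving them as named difficulties means the proof is not complete.

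The paper closes this gap differently: it does not reprove the achievability statement but reduces it to Lalley's Proposition 5.3 for the matrices $H_g(a,b)=\bP[T_g<\infty,\ Y_{T_g}=gb \mid Y_0=a]$ indexed by the ball $\cB$ of radius $L$, which is exactly the uncolored analogue of what you are trying to establish and is already proved in \cite{lalley1993finite} under irreducibility of $(Y_n)$. The transfer to the colored walk is done via the observation that each color $v\neq 1$ forces a bounded deterministic past and future (the walk must have just been multiplied by $p(h)$ from a neutral state and must continue with $s(h)$ for some $h\in K(v)$), giving $q_g(u,v)\geq H_g(s(h'),s(h))$ and the equivalence of positivity; the suffix-dependence and $u$-independence then come for free from the corresponding property of $H_g$. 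If you want to salvage your route, the honest options are either to carry out your third paragraph in full detail (essentially redoing Lalley's combinatorial argument in the colored setting) or to make the reduction to $H_g$ explicit as the paper does; as written, the proposal asserts the conclusion of the hard lemma rather than proving it.
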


\begin{proof}  

	Extend the definition \eqref{eq:defqg} of matrices $q_{g}$ to the whole group and notice that if $g \in G$ writes as a reduced word $g = g_{1} \cdots \ g_{n}$, then $q_{g} = q_{g_{1}} \cdots \ q_{g_{n}}$. We recall that given the set of generators considered \eqref{eq:generators} representatives are uniquely defined. Thus proving \eqref{eq:condition_imprimitive} comes down to prove that for $\abs{g}$ large enough, $q_{g}(u,v) > 0$ if and only if $v$ is in some subset of $[r]$ that depends only on the last letters of $g$. This statement regarding the coloured random walk can be transferred to a statement about the initial walk as follows.
		
		Consider the constructions of the linearized random walks in Section \ref{section:linearization}: both are made so that if $X_{0} = e$ and $Y_{0} = (e,1)$, one can couple $(Y_{n})_{n \geq 0}$ and $(X_{n})_{n \geq 0}$ to have $X_{n} = Y_{\tau_{n}}$ where $\tau_{n}$ is the $n$-th return time at colour $1$.
		Furthermore, for all colour $v \in [r]$, there exists (at least one) $h \in K$ with representative $h=h_{1} \cdots h_{k} h_{k+1} \cdots h_{l}$ such that $\bP_{(e,1)} \sbra{Y_{k} = (h_{1} \cdots h_{k}, v)} > 0$. The main difference between the proposed linearization procedures is the number of such $h$ that one can associate to the colour $v$, but this has no consequence for what follows.
		
		Let $K(v)$ be the set of such elements $h$. For each $h = h_{1} \cdots h_{k} h_{k+1} \cdots h_{l} \in K(v)$, set $p(h) := h_{1} \cdots h_{k}$ and $s(h) := h_{k+1} \cdots h_{l}$. Then consider the following events: given $a,g \in G$, $h=h_1 \cdots h_l \in K$, if there exists $k \leq l$ such that $a h_1 \cdots h_k = g$, let 
		\[ 
		A_{a,h}(g) := \left\{ \exists n \geq 0 \: | \: X_{n} = a, X_{n+1} = ah \right\}
		\]
		and $A_{a,h}(g) = \emptyset $ otherwise, and let $B_{a,h}(g)$ the event that $A_{a,h}(g)$ occurs before all other events $A_{b,h'}(g)$, $(b,h') \neq (a,h)$.
		
		Suppose now $Y_{0}$ starts with colour $u$. By $G$-invariance of transition probabilities we can suppose that it is at $(e,u)$. Then the starting colour imposes the walk to transition towards $(s(h),1)$ for some $h \in K(u)$. Once at state $(s(h),1)$, $Y_{n}$ will visit $g$ if and only $X_n$, started at $s(h)$, makes a transition that goes through $g$, ie if and only if the event $A_{a,h'}(g)$ occurs for some $a \in G, h' \in K$. Thanks to the irreducibility of $X_{n}$ it is clear that such transitions occur, but the difficulty is to make sure that $Y_{n}$ can arrive for the first time at $g$ with the prescribed color $v$. This will occur with the events $B_{a,h'}(g)$ for $h' \in K(v)$ and $a = g p(h')^{-1}$. Given $x \in G$ $h \in K$, let $m_{g}(x,h) := \bP_{x}\sbra{B_{g p(h)^{-1},h}(g)}$. What precedes thus sums up in the lower bound
		\[ 
		q_{g}(u,v) \geq \bP_{(e,u)} \sbra{\tau_{(s(h),1)} < \infty} m_{g}(s(h), h') \quad \forall h \in K(u), h' \in K(v)
		\]
		where $\bP_{(e,u)} \sbra{\tau_{(s(h),1)} < \infty} > 0$ is the probability that $Y_n$ visits $(s(h),1)$. In particular, it now suffices to prove that for all $x,g \in G$, $h \in K$, the positivity of $m_{g}(x,h)$ depends only on $h$ and the last letters of $g$, provided $\abs{g}$ is large enough. This can be shown in the same way as Lemma 5.3 in \cite{lalley1993finite} where a similar result is proved. 
		
		Let $L$ be the largest length of elements in the support of $X_{n}$. 
		By $G$-invariance of transition probabilities $m_{g}(x,h) = m_{x^{-1}g}(e,h)$ so one can in particular suppose $x = e$. From the irreducibility of $X_n$, every element of the generating set $S$ can be reached by the random walk in a bounded number of steps $l$, because $S$ is finite. Therefore for $\abs{g} > 2 L + l$, any path of positive probability that goes from $e$ to $gp(h)^{-1}$ without passing through $g$ and then moves from $gp(h)^{-1}$ to $gp(h)^{-1} h = gs(h)$ can be extended with an initial loop between $e$ and any $s \in S$. From this we deduce that $m_{g}(e,h) > 0$ if and only if $m_{g}(s,h) = m_{s^{-1}g}(e,h) > 0$. Choosing $s$ as the first letter of $g$, we deduce that the positivity of $m_{g}(a,b)$ does not depend on it. Iterating this argument we obtain that it only depends on $h$ and the $2L + l$ last letters of $g$. \end{proof}

\paragraph{Law of Radon-Nikodym derivatives.}

The previous result is purely deterministic and does not take into account that the process $(X_{\infty}^{(n)},u_{n})_{n \geq 0}$ is a Markov chain. Our goal in this paragraph is to determine the law of the probability vector $V$ appearing in Corollary \ref{cor:convergenceRN}. The monograph \cite{bougerol2012products} (see also \cite{carmona2012spectral}) state results for products of iid random matrices which, as we shall see, apply to Markovian products as well.

Remploying the general framework of Proposition \ref{prop:convergence_product}, consider a Markov chain $(Z_{n})_{ n \geq 0}$ on the finite state space $X$ with transition matrix $Q$ and $Y:= \{(x,y) \in X \times X, Q(x,y) > 0\}$. Let $(M_{x})_{x \in X}$ be a family of $r \times r$ non-negative matrices, such that condition \eqref{eq:condition_imprimitive} holds. 

Consider the sequence defined by $Y_{n} := M_{Z_{n}}, n \geq 1$. By Proposition \ref{prop:convergence_product}, a.s. the product $Y_{1} \cdots Y_{n}$ is non-zero and converges in direction to a rank one matrix spanned by a random vector $V \in \cPr$. 
If one multiplies the product on the left by another matrix $M_{x}$, one still has a products of matrices $M_{x}$, so one can expect the law of $V$ to satisfy some invariance property.

For all non-negative matrix $A \in M_{r}(\bR)$ and all $z \in \cPr$, we define $A z$ to be the normalized image of $z$ (identified with a vector in $\bR^{r}$) that makes it a probability vector, provided $A z \neq 0$. The latter case will not be an issue: condition \eqref{eq:condition_imprimitive} implies in particular that matrices $M_{x}$ have no zero row, and vectors of $\cPr$ have no zero coordinate so $M_{x} z$ is well defined for all $x \in X, z \in \cPr$.

Since $Z_n$ is a Markov chain, the law of $V$ may depend on the starting state of $Z_{n}$. Hence it is natural to think of it as a coloured measure (colours being here the states $x \in X$).

\begin{Def}
A family  $\nu=(\nu_{x})_{x \in X}$ of probability measure on $\cPr$ is called a coloured measure. Let $Q \ast \nu$ denote the coloured measure defined by 
 \[
  \int f(z) \: d (Q \ast \nu)_{x}(z) = \sum_{y} \int f(M_{y} z) Q(x,y) \: d \nu_{y}(z)
 \]
for all bounded measurable function $f$ on $\cPr$ and $x \in X$. The coloured measure $\nu$ is said to be invariant if $Q \ast \nu = \nu$.
\end{Def}

\begin{lem}\label{le:nuinvV}
  Let $\nu_{x}$ be the law of $V$ when $Z_{n}$ is started at $x$, the coloured measure $(\nu_{x})_{x \in X}$ is the unique coloured measure which is invariant with respect to $Q$. 
\end{lem}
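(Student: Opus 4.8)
The plan is to establish the two halves of the statement in turn: first that the family $\nu=(\nu_x)_{x\in X}$ of laws of $V$ is $Q$-invariant, and then that it is the unique invariant colored measure. Throughout, recall that by Proposition \ref{prop:convergence_product} the direction $V=V(\xi)$ is a deterministic function of the forward trajectory $(\xi_1,\xi_2,\dots)$, so for a one-sided chain started at $x$ it is a measurable function of $(Z_1,Z_2,\dots)$, and recall the convention that $M_y z$ denotes the normalized image in $\cPr$, which is well defined since condition \eqref{eq:condition_imprimitive} forces each $M_y$ to have no zero row and hence to map $\cPr$ into itself.

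For invariance I would exploit the recursive structure of the limiting direction. Writing $V$ for the a.s.\ limit direction of $M_{Z_1}\cdots M_{Z_n}$ and $V''$ for that of the shifted product $M_{Z_2}\cdots M_{Z_n}$, the factorization $M_{Z_1}\cdots M_{Z_n}=M_{Z_1}(M_{Z_2}\cdots M_{Z_n})$ together with relation \eqref{eq:invariance_V} gives the identity $V=M_{Z_1}V''$ in $\cPr$ (the normalized action). Since $V''$ depends only on $(Z_2,Z_3,\dots)$, the Markov property shows that conditionally on $Z_1=y$ its law is $\nu_y$, whence $V$ has conditional law $(M_y)_\ast\nu_y$, the pushforward of $\nu_y$ under $z\mapsto M_yz$. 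Decomposing over the first step $Z_1$ of the chain started at $x$ and using that $V$ depends only on $(Z_1,Z_2,\dots)$ yields $\nu_x=\sum_y Q(x,y)(M_y)_\ast\nu_y$, which is exactly $Q\ast\nu=\nu$.

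For uniqueness I would iterate the invariance equation. If $\lambda=(\lambda_x)$ is any invariant colored measure, unfolding $\lambda=Q\ast\lambda$ $n$ times gives, for every bounded continuous $f$ on $\cPr$,
\[
\int f\,d\lambda_x = \bE_x\Big[\int f\big(M_{Z_1}\cdots M_{Z_n}z\big)\,d\lambda_{Z_n}(z)\Big],
\]
with $Z$ the chain started at $x$. The decisive point is that the projective action $z\mapsto M_{Z_1}\cdots M_{Z_n}z$ collapses to the single direction $V$: writing $M_{Z_1}\cdots M_{Z_n}=e^{S_n\varphi}(\gamma_n VW_n^\top+E_n)$ with $\|E_n\|\le C\alpha^n$ and normalizing, one obtains $M_{Z_1}\cdots M_{Z_n}z=V+O\big(\alpha^n/\langle W_n,z\rangle\big)$. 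Granting that this holds uniformly in $z\in\cPr$, the inner integral converges a.s.\ to $f(V)$ irrespective of the measures $\lambda_{Z_n}$, and bounded convergence yields $\int f\,d\lambda_x=\bE_x[f(V)]=\int f\,d\nu_x$; as the left-hand side is independent of $n$, we conclude $\lambda_x=\nu_x$, and since bounded continuous functions are measure-determining on the metric space $\cPr$, this gives equality of the colored measures.

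The main obstacle is precisely this uniform collapse of the simplex: one must check that the normalizing pairings $\langle W_n,z\rangle$ are bounded away from $0$ uniformly in $n$ and in $z$ over the (closed) simplex. This follows because $W_n=W(\sigma^n\xi)$ ranges over the image $W(\Sigma)$, which is a compact subset of the open simplex $\cPr$ by continuity of $W$ and compactness of $\Sigma$, so the entries of the limiting row vectors are bounded below by a positive constant, and likewise $\gamma_n$ stays in a compact positive interval. Once this uniformity is secured, the remainder is routine bookkeeping with the Markov property and bounded convergence.
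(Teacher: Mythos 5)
Your proof is correct and follows essentially the same route as the paper: the invariance half is the same first-step decomposition via \eqref{eq:invariance_V} and the Markov property, and your iterated identity $\int f\,d\lambda_x=\bE_x\big[\int f(M_{Z_1}\cdots M_{Z_n}z)\,d\lambda_{Z_n}(z)\big]$ is exactly the constancy in expectation of the bounded martingale $M_n$ that the paper uses, with the limit identified as $\delta_V$ in both cases. The only difference is cosmetic: you bypass the martingale convergence theorem by proving the collapse of $Y_1\cdots Y_n\ast\lambda_{Z_n}$ to $\delta_V$ directly, and in doing so you make explicit the uniformity in $z$ (via $\langle W_n,z\rangle$ bounded below) that the paper leaves implicit.
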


\begin{proof}
We use Markov property together with \eqref{eq:invariance_V} to obtain the invariance of the law of $V$. 
 
 The proof of uniqueness is similar to the proof for the harmonic measure in Proposition \ref{prop:ex+uni_stationary}. Consider another invariant coloured measure $(\rho_{x})_{x \in X}$.
 By invariance, for all bounded measurable function $f$ on $\cPr$ the sequence 
 \[
  M_{n} := \int f( Y_{1} \cdots \ Y_{n} w) \: d \rho_{Z_{n}}(w), \quad n \geq 0
 \]
 is a bounded martingale with respect to the filtration $\cF_{n} := \sigma( Z_{0}, Z_{1}, \ldots, Z_{n})$, so $M_{n}$ converges a.s. and in $L^{1}$. This being true for all bounded measurable function, this implies the measures $Y_{1} \cdots \ Y_{n} \ast \rho_{Z_{n}}$ converges weakly to a measure. 
 Now, because the product $Y_{1} \cdots Y_{n}$ converges in direction to $V$, the limit is necessarily the Dirac mass at $V$. 
 
 On the other hand, the martingale property with the $L^{1}$ convergence gives that for all bounded measurable functions $f$ on $\cPr$, for all $x \in X$,
 \[
 \bE_{x} \sbra{ f(V) } =   \bE_{x} \sbra{ \int f \: \delta_{V} } =  \int f \: d \rho_{x} .
 \]
 Therefore $\rho_{x}$ must be the law of $V$ when $Z_0 =x$, that is $\rho_x = \nu_{x}$, and is consequently unique.
\end{proof}

We are finally ready to prove Theorem \ref{thm:formula_entropy_explicit}.

\begin{proof}[Proof of Theorem \ref{thm:formula_entropy_explicit}]
We apply Lemma \ref{le:nuinvV} in the context of coloured random walks, with $X = S \times [r]$ and $Q((g,u)(h,v)) = \mu_{h}(u,v)$. Due to the form of the transition probabilities the law of $V$ is only indexed by colours. It remains to use Corollary \ref{cor:convergenceRN} in conjonction with the integral formula of the entropy given by Theorem \ref{thm:formula_drift_entropy}.
\end{proof}

\paragraph{Acknowledgements}
Part of this work was performed during a visit of both authors to
Kyoto University. Both authors gratefully acknowledge the
support of JSPS and Kyoto University. C.B. was supported by the research grant ANR-16-CE40-0024-01.  We thank Jean Mairesse for stimulating discussions and Vadim Kaimanovich for pointing us reference \cite{kaimanovich2005munchhausen}.

\end{document}